\theoremstyle{plain}
\newtheorem{thm}{Theorem}[section]
\newtheorem{lem}[thm]{Lemma}
\newtheorem{cor}[thm]{Corollary}
\newtheorem{prop}[thm]{Proposition}
\newtheorem{conj}[thm]{Conjecture}
\theoremstyle{definition}
\newtheorem{rem}[thm]{Remark}
\newtheorem{defn}[thm]{Definition}
\newtheorem{prob}[thm]{Problem}
\newtheorem{que}[thm]{Question}
\numberwithin{equation}{section}
\def\A{{\mathbb A}}
\def\Q{{\mathbb Q}}
\def\C{{\mathbb C}}
\def\P{{\mathbb P}}
\def\B{{\mathbb B}}
\def\Aut{\mathop{\mathrm{Aut}}\nolimits}
\def\SL{\mathop{\mathrm{SL}}\nolimits}
\def\ord{\mathrm{ord}}
\def\L{\mathscr{L}}
\def\M{\mathcal{M}}
\def\OO{\mathscr{O}}
\def\A{\mathcal{A}}
\def\a{\alpha}
\def\b{\beta}
\def\U{\mathrm{U}}
\newcommand{\defeq}{\vcentcolon=}
\begin{document}

\title[Remarks on two problems by Hassett]
{Remarks on two problems by Hassett}
\dedicatory{Dedicated to Yuri Tschinkel on the occasion of his 60th birthday}

\author[Klaus Hulek and Yota Maeda]{Klaus Hulek$^{1}$ \and Yota Maeda$^{2, 3}$}
\email{hulek@math.uni-hannover.de,\quad y.maeda.math@gmail.com}

\date{\today}

\maketitle
\vspace{-1em}
\begin{center}
  \begin{minipage}{0.9\textwidth}
    \centering
    {\small
    $^{1}$ Institut f\"ur Algebraische Geometrie, Leibniz University Hannover, Germany.\\
    $^{2}$ Fachbereich Mathematik, Technische Universit\"at Darmstadt, Germany.\\
    $^{3}$ Mathematical Institute, Tohoku University, Japan.
    }
  \end{minipage}
\end{center}

\vspace{1em}

\date{\today}

\maketitle
\begin{abstract}
One of the ultimate goals of the Hassett-Keel program is the determination of the log canonical models of the moduli spaces of pointed rational curves $\overline{\M}_{0,n}$.
In this paper, we study log canonical models of $\overline{\M}_{0,5}$ with \textit{asymmetric} boundary divisors.
Our results generalize previous work by Alexeev-Swinarski, Fedorchuk-Smyth, Kiem-Moon and Simpson for the  first non-trivial case, namely $n=5$.
We prove that all moduli spaces of weighted pointed rational curves $\overline{\M}_{0,\A}$ arise as log canonical models of $\overline{\M}_{0,5}$ for suitable choices of boundary coefficients, thereby also recovering a theorem of Fedorchuk and Moon. In addition, we relate these moduli spaces to Deligne-Mostow ball quotients.
We further study log canonical models of the moduli spaces $\overline{\M}_{0,n\cdot (1/k)}$ with symmetric weight, which differ from $\overline{\M}_{0,n}$.
The case $n=5$ can be viewed as an explicit guiding example  in a very general program 
and the paper can thus also serve as an expository introduction.  
\end{abstract}

\section{Introduction}
\label{sec:introduction}
The moduli spaces $\overline{\M}_g$ of stable curves of genus $g$ were constructed by Deligne and Mumford \cite{DM69} and have been central objects of algebraic geometry ever since. The aim of the Hassett-Keel program 
is to run the log minimal model program for $\overline{\M}_g$ with its natural boundary consisting of the singular stable curves. 
Due to the work of Hassett \cite{Has05}, Hassett and Hyeon \cite{HH09,HH13} and Hyeon and Lee \cite{HL10}, this is now fully understood for $g \leq 3$. 
The case $g = 4$ was initially addressed by Casalaina-Martin, Jansen and Laza \cite{CMJL12, CMJL14}, 
and Fedorchuk \cite{Fed12}.
This was continued by Liu and Zhao \cite{LZ24}, with Ascher, DeVleming, Liu, and Wang \cite{ADLW25} completing the analysis of the remaining cases using techniques from the theory of $K$-stability.

In a parallel direction, Hassett proposed the analogous problem of determining the log canonical models of the Deligne-Mumford compactification of the moduli space of stable $n$-pointed rational curves $\overline{\M}_{0,n}$ 
with combinatorial boundary divisors.
He also provided several explicit examples \cite[Section 7, Remark 8.5]{Has03}.
In detail, let $\M_{0.n}$ be the open subset where the underlying curve is smooth and let $D\defeq \overline{\M}_{0,n} \setminus \M_{0.n}$
be the union of the boundary divisors, which is decomposed into irreducible components $D_{I,I^c}$ where $I\subset \mathbb{N}_n\defeq\{1,\cdots,n\}$ with $2\leq |I| \leq \lfloor n/2\rfloor$ and $I^c$ is its complement.
The geometric meaning of the divisor $D_{I,I^c}$ is that a generic point corresponds to stable nodal curves consisting of two irreducible components, with the marked points indexed by $I$ on one component, and those indexed by $I^c$ on the other.
Since the boundary divisors $D_{I,I^c}$ form a simple normal crossing divisor, the pair $(\overline{\M}_{0,n},\sum_I d_{I,I^c} D_{I,I^c})$ is log canonical for any choice of coefficients $0 \leq d_{I,I^c} \leq 1$.
The central problem is then to determine the log canonical model
\begin{align}
    \label{eq:lc model general}
    \overline{\M}_{0,n}\left(\{d_{I,I^c}\}_I\right)\defeq \mathrm{Proj} \bigoplus_{s\geq 0} H^0\left(\overline{\M}_{0,n}, s\left(K_{\overline{\M}_{0,n}} + \sum_I d_{I,I^c} D_{I,I^c} \right)\right).
\end{align}

\begin{prob}[{\cite[Problem 7.1]{Has03}}]
\label{prob:Hassett}
    Determine $\overline{\M}_{0,n}\left(\{d_{I,I^c}\}_I\right)$ for the coefficients $0 \leq d_{I,I^c} \leq 1$. 
\end{prob}
To recall the progress on this problem so far, we briefly review the relevant moduli spaces; see \cite[Section 2]{Has03} in detail.
For an $n$-tuple $\A=(a_i)_i\in\Q^n$ with $0 < a_i\le 1$ and $\sum_i a_i > 2$, which we call \emph{weight} in this paper, Hassett constructed a coarse moduli space $\overline{\M}_{0,\A}$ of weighted pointed rational curves with respect to the weight data $\A$ \cite[Theorem 2.1]{Has03}.
These moduli spaces $\overline{\M}_{0,\A}$ generalize the spaces $\overline{\M}_{0,n}$, which can also be interpreted as the Hassett space for all weights being equal to $1$. 
All other Hassett spaces 
 $\overline{\M}_{0,\A}$ are realized as combinatorial contractions of the boundary divisors $D_{I,I^c}$; see Section \ref{sec_VGIT}. A well-known approach to Problem \ref{prob:Hassett} is to analyse the correspondence between the variation of coefficients $d_{I,I^c}$ in the description of (\ref{eq:lc model general}) and the variation of weights $\A$ in the Hassett spaces.

Determining the log canonical model $\overline{\M}_{0,n}\left(\{d_{I,I^c}\}_I\right)$ for arbitrary coefficients $\{d_{I,I^c}\}_I$ is a challenging problem. However, significant progress has been made in the case where all coefficients $d_{I,I^c}$ are equal to a single parameter $\alpha$; we refer to this as the \textit{symmetric} case.  
In this setting, Problem \ref{prob:Hassett} has been extensively studied by 
Simpson \cite{Sim08} (assuming Fulton's conjecture), Alexeev-Swinarski \cite{AS12}, Kiem-Moon \cite{KM11} and Fedorchuk-Smyth \cite{FS11}, identifying the log canonical models 
as the moduli spaces of weighted pointed rational curves with symmetric weight $\overline{\M}_{0, n\cdot(1/k)}$, where 
\[n\cdot (1/k) \defeq \left(\frac{1}{k},\cdots, \frac{1}{k}\right)\in(\Q\cap [0,1])^n.\]

\begin{thm}[{\cite{AS12, FS11, KM11, Sim08}}]
\label{thm:lcmodel_previous}
Let $\a\in\Q$ with $2/(n-1) < \a \leq 1$.
Then, the log canonical model 
\[\overline{\M}_{0,n}(\a) \defeq \mathrm{Proj} \bigoplus_{s\geq 0} H^0\left(\overline{\M}_{0,n}, s\left(K_{\overline{\M}_{0,n}} + \a D \right)\right)\]
   satisfies the following 
\begin{enumerate}
    \item If $2/(k+2)< \a \leq 2/(k+1)$ for $1\leq k \leq \lfloor (n-1)/2\rfloor$, then $\overline{\M}_{0,n}(\a)\cong \overline{\M}_{0, n\cdot (1/k)}$.
    \item If $2/(n-1) < \a \leq 2/(\lfloor n/2 \rfloor+1)$, then $\overline{\M}_{0,n}(\a)\cong (\P^1)^n//_{\OO(1,\cdots, 1)}\SL_2(\C)$.
\end{enumerate}
\end{thm}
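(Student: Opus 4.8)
The plan is to work with $\Q$-divisor classes on $\overline{\M}_{0,n}$ and to realize each candidate model as the target of an explicit reduction morphism, then to show that $K_{\overline{\M}_{0,n}} + \alpha D$ differs from the pullback of an ample class by an effective divisor supported on the exceptional locus. First I would pass to the $S_n$-invariant part of $\NS(\overline{\M}_{0,n})_\Q$, which is spanned by the symmetric boundary classes $B_j \defeq \sum_{|I| = j} D_{I,I^c}$ for $2 \le j \le \lfloor n/2 \rfloor$ (with the usual convention halving $B_{n/2}$ when $n$ is even), and record the standard formula
\begin{equation*}
K_{\overline{\M}_{0,n}} + \alpha D \;=\; \sum_{j=2}^{\lfloor n/2 \rfloor} \Big( \alpha + \frac{j(n-j)}{n-1} - 2 \Big) B_j ,
\end{equation*}
whose $B_2$-coefficient is $\alpha - 2/(n-1)$, nonnegative precisely under the hypothesis. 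The morphisms in play are the Hassett reduction maps $\rho_k \colon \overline{\M}_{0,n} \to \overline{\M}_{0, n\cdot(1/k)}$, which exist because $n\cdot(1/k) \le (1,\dots,1)$: a rational tail carrying $j$ of the weight-$\tfrac1k$ points together with its node has total weight $1 + j/k$, hence is destabilized exactly when $j \le k$, and for $j \ge 3$ this collapses a positive-dimensional family, i.e.\ is a genuine contraction of the corresponding boundary stratum. When $k = \lfloor (n-1)/2 \rfloor$ every boundary divisor is affected and $\overline{\M}_{0,n\cdot(1/k)}$ is identified with the GIT quotient $(\P^1)^n //_{\OO(1,\dots,1)} \SL_2(\C)$; for even $n$ one needs the additional contraction of $B_{n/2}$ onto the (strictly semistable, hence mildly singular) quotient, which is why case (2) reaches below the bottom of case (1).

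\textbf{Step 1 (pullback identities).} For $\alpha$ in the interval $\big( 2/(k+2),\, 2/(k+1) \big]$ I would equip $\overline{\M}_{0,n\cdot(1/k)}$ with a natural ample polarization $L_k$ (essentially a combination of $\bar\psi$-classes) coming from Hassett's construction, compute its pullback in the $B_j$-basis using the comparison $\rho_k^* \bar\psi_i = \psi_i - \sum_{i \in I,\, 2 \le |I| \le k} D_{I,I^c}$ together with the displayed formula, and verify by a linear computation that
\begin{equation*}
K_{\overline{\M}_{0,n}} + \alpha D \;=\; \rho_k^* L_k' \,+\, E_k(\alpha)
\end{equation*}
for a positive multiple $L_k'$ of $L_k$, where $E_k(\alpha)$ is effective, supported on the divisors $D_{I,I^c}$ with $|I| \le k$, and has all coefficients $\ge 0$ on this interval, the coefficients attached to the $|I| = k$ divisors vanishing exactly at $\alpha = 2/(k+1)$ --- which is what pins the wall. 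The analogous identity with $(\P^1)^n // \SL_2(\C)$ and the pullback of $\OO(1,\dots,1)$ handles case (2).

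\textbf{Step 2 (nefness and conclusion).} To deduce that the right-hand side genuinely computes the log canonical model one must know that $\rho_k^* L_k'$ is nef with trivial curve classes exactly those contracted by $\rho_k$ --- equivalently that its pushforward is ample. I would test this against the F-curves $F_{a,b,c,d}$ ($a+b+c+d = n$): a direct intersection computation gives $(K_{\overline{\M}_{0,n}} + \alpha D) \cdot F_{a,b,c,d} \ge 0$ throughout the interval, with equality precisely for the F-curves spanning a fiber of $\rho_k$, and the transition values $2/(k+1)$ fall out of these F-curves. Granting Fulton's conjecture (F-curves generate the Mori cone, known unconditionally for small $n$) this suffices; in general one proves nefness of these particular divisors directly, either by exploiting the $S_n$-symmetry to cut down the test curves, or by realizing $K_{\overline{\M}_{0,n}} + \alpha D$ as a pulled-back ample class through a variation-of-GIT picture. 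A standard argument then shows the section ring is finitely generated with $\mathrm{Proj}$ the chosen model, and at a wall $\alpha = 2/(k+1)$ one checks the divisor is already ample on the smaller model, so no flip intervenes.

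\textbf{Main obstacle.} Once the pullback identity of Step 1 and the nefness of Step 2 are in hand, the numerology --- the walls $2/(k+1)$ and the final collapse onto the GIT quotient as $\alpha \searrow 2/(n-1)$ --- is essentially forced. The genuinely hard point is Step 2 for general $n$: proving that $K_{\overline{\M}_{0,n}} + \alpha D$ is nef (and that nothing unexpected occurs strictly between consecutive walls) without assuming Fulton's conjecture. This is exactly the content that distinguishes the four cited approaches --- Simpson's (conditional on Fulton), Alexeev--Swinarski's and Kiem--Moon's (unconditional, via symmetry and via GIT respectively), and Fedorchuk--Smyth's modular criterion --- and is where the bulk of the work lies. (For $n = 5$ everything collapses: $K_{\overline{\M}_{0,5}} + \alpha D = (2\alpha - 1)(-K_{\overline{\M}_{0,5}})$ on the quintic del Pezzo surface $\overline{\M}_{0,5}$, so the statement is immediate and all models in sight are isomorphic.)
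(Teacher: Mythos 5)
Your proposal is correct and follows essentially the same route as the paper and the sources it cites: the paper states Theorem \ref{thm:lcmodel_previous} as a known result of Alexeev--Swinarski, Fedorchuk--Smyth, Kiem--Moon and Simpson rather than reproving it, but its own arguments for Theorem \ref{thm:n=5} and Theorem \ref{thm:lcmodel} use exactly your blueprint --- write $K_{\overline{\M}_{0,n}}+\alpha D$ as the pullback of a divisor on the smaller Hassett model plus an effective exceptional divisor, then establish nefness/ampleness via intersection with the vital curves $C(a,b,c,d)$ together with Fulton's conjecture (for $n=5$) or via Kiem--Moon's ampleness result. You also correctly identify the genuine difficulty as your Step 2 (unconditional nefness for general $n$), which is precisely the point on which the four cited proofs differ.
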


Here we study the situation for a concrete $n$ in detail. 
The cases $n \leq 3$ are trivial, as the moduli spaces are 
$0$-dimensional.   
Next is the case  $n=4$.
But in this case all varieties $\overline{\M}_{0,\A}$ are isomorphic as coarse moduli spaces, as we will explain in Remark \ref{rem:chamber_decomposition}.
This is the reason why we treat the case $n=5$ in this paper, as this is the first nontrivial case for Problem \ref{prob:Hassett} considered here. 
It turns out that the geometry is not so very complicated in this case, all spaces being closely related to del Pezzo surfaces. But we still think it worthwhile to describe this case explicitly, as it can also serve as an expository introduction and  a guiding example for future investigations.

When $n = 5$, and if $\A$ is symmetric, that is each component has the same value, then $\overline{\M}_{0,\A} \cong \overline{\M}_{0,5}$ as coarse moduli spaces; see Section \ref{sec_VGIT} for more details.
This observation implies the following corollary.
\begin{cor}[The case $n=5$ in Theorem \ref{thm:lcmodel_previous}]
\label{cor:n=5_previous}
For the case of $n=5$,  the log canonical model $\overline{\M}_{0,5}(\a)$ can be determined as follows:
if $1/2< \a \leq 1$, then $\overline{\M}_{0,5}(\a)\cong\overline{\M}_{0,5}\cong (\P^1)^5//_{\OO(1,\cdots,1)}\SL_2(\C)$.
\end{cor}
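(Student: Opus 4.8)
The plan is to extract the statement directly from Theorem~\ref{thm:lcmodel_previous} applied with $n=5$, together with the exceptional fact recalled above (and established in Section~\ref{sec_VGIT}) that a symmetrically weighted Hassett space on five points is isomorphic, as a coarse moduli space, to $\overline{\M}_{0,5}$. First I would record the numerology: for $n=5$ one has $2/(n-1)=1/2$, $\lfloor (n-1)/2\rfloor = 2$ and $\lfloor n/2\rfloor = 2$. Hence part~(1) of Theorem~\ref{thm:lcmodel_previous}, with $k$ ranging over $\{1,2\}$, covers exactly the interval $1/2 < \a \le 1$, split into $2/3 < \a \le 1$ (the case $k=1$), where $\overline{\M}_{0,5}(\a)\cong \overline{\M}_{0,5\cdot(1/1)} = \overline{\M}_{0,5}$, and $1/2 < \a \le 2/3$ (the case $k=2$), where $\overline{\M}_{0,5}(\a)\cong \overline{\M}_{0,5\cdot(1/2)}$.

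Next I would invoke the symmetric-weight coincidence: since $5\cdot(1/2) = (1/2,\dots,1/2)$ is a legitimate weight ($\sum_i a_i = 5/2 > 2$) and is symmetric, Section~\ref{sec_VGIT} gives $\overline{\M}_{0,5\cdot(1/2)} \cong \overline{\M}_{0,5}$. Combining this with the previous step shows $\overline{\M}_{0,5}(\a) \cong \overline{\M}_{0,5}$ for every $\a$ with $1/2 < \a \le 1$. For the GIT description I would then use part~(2) of Theorem~\ref{thm:lcmodel_previous}: since $\lfloor n/2\rfloor + 1 = 3$, for $1/2 < \a \le 2/3$ one has $\overline{\M}_{0,5}(\a) \cong (\P^1)^5 //_{\OO(1,\cdots,1)}\SL_2(\C)$. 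Chaining this with the isomorphism $\overline{\M}_{0,5}(\a)\cong\overline{\M}_{0,5}$ already obtained on that same range yields $\overline{\M}_{0,5}\cong (\P^1)^5 //_{\OO(1,\cdots,1)}\SL_2(\C)$, and this identification then propagates to the range $2/3 < \a \le 1$ as well.

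There is no serious obstacle here: the corollary is a bookkeeping consequence of Theorem~\ref{thm:lcmodel_previous}. The only non-formal ingredient is the isomorphism $\overline{\M}_{0,5\cdot a}\cong\overline{\M}_{0,5}$ for a symmetric weight $a\in\Q\cap(2/5,1]$, which one checks directly from Hassett's reduction morphisms: because $3a > 6/5 > 1$, at most two of the five marked points may coincide on an $a$-stable curve, and a curve in $\overline{\M}_{0,5\cdot a}$ with a coincident pair is recovered from a unique point of $\overline{\M}_{0,5}$ by bubbling off that pair, so the reduction morphism $\overline{\M}_{0,5}\to\overline{\M}_{0,5\cdot a}$ is a birational bijection between normal projective surfaces and hence an isomorphism by Zariski's main theorem. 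The only point that formally needs checking is that the two descriptions of $\overline{\M}_{0,5}(\a)$ coming from parts~(1) and~(2) on the overlap $1/2 < \a \le 2/3$ agree, which is automatic since both compute the same $\mathrm{Proj}$.
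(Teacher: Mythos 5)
Your proposal is correct and follows essentially the same route as the paper: the corollary is obtained by specializing Theorem \ref{thm:lcmodel_previous} to $n=5$ and combining it with the observation that every symmetric Hassett space $\overline{\M}_{0,5\cdot a}$ with $a>2/5$ coincides with $\overline{\M}_{0,5}$. The only cosmetic difference is that the paper justifies this coincidence via the coarse chamber decomposition of Section \ref{sec_VGIT} (the walls $\A(S)=1$ with $|S|=3$ are never crossed since $3a>6/5>1$, so all symmetric weights lie in the type (A) chamber), whereas you verify directly that the reduction morphism contracts nothing; the two arguments are equivalent.
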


As we shall see,  there are 76 distinct Hassett spaces $\overline{\M}_{0,\A}$ for possible weight data $\A$ arising as coarse moduli spaces (Proposition \ref{prop: n=5 Hassett spaces}). Therefore, it is natural to ask whether they can be interpreted  
as log canonical models corresponding to boundary divisors with \textit{asymmetric} weights.

Before we enter this discussion, we recall that Hassett \cite[Section 5]{Has03} defined the {\em weight domain} of admissible weights 
\[
\mathcal D_{g,n} \defeq \{(a_1, \ldots , a_n) \in \mathbb R^n \mid 0< a_i \leq 1 \textrm{ and } \sum_i a_i > 2 -2g \}.
\]  
 He further introduced two chamber decomposition of this domain. 
These are defined by removing certain walls and then taking the connected components. In the case of the 
{\em {coarse}} chamber decomposition one removes the walls 
\[
{\mathcal W}_c \defeq \{ \sum_{j \in S} a_j=1 \mid S \subset \{1, \ldots, n\},\ 3 \leq  |S|  \leq n \}
\footnote{We recall that there was a minor error in the original definition of the walls, which was pointed out by K.~Ascher, see also the paper by Alexeev-Guy \cite[Remark 2.3]{AG08} (although this does not affect our setting $g=0$).}
\]
whereas one obtains the {\em{fine}} chamber decomposition by removing the walls 
\[
{\mathcal W}_f \defeq \{ \sum_{j \in S} a_j=1 \mid S \subset \{1, \ldots, n\},\ 2 \leq  |S|  \leq n \}. 
\]
The chambers obtained by removing these walls describe the variation of the moduli problems introduced by Hassett. Here, coarse chamber decomposition defines the coarsest decomposition 
such that $\overline{\M}_{g,\A}$ is constant on each chamber, whereas the fine chamber decomposition is the coarsest decomposition such that the universal family 
$\mathcal C_{g,\mathcal A}$ is constant on each chamber, see \cite[Proposition 5.1]{Has03}.
Hassett  proposed the problem of counting the number of chambers of $\mathcal{D}_{g,n}$: 
\begin{prob}[{\cite[Problem 5.2]{Has03}}]
\label{prob:Hassett counting of chamber decomposition}
    Determine the number of nonempty chambers in $\mathcal{D}_{g,n}$. 
\end{prob}

This is a highly non-trivial combinatorial problem. The enumeration of fine chambers was studied in some depth in \cite{ADGH20}. There, the authors provide explicit numbers for small $n$ and determine the asymptotic behaviour for large $n$. For $g=0$, there are 1087 fine chambers for $n=5$ and 105123 for $n=6$.  As we shall see in Proposition \ref{prop: n=5 Hassett spaces} the number of coarse chambers is 
considerably smaller, namely 76 for $n=5$.
From the perspective of computational complexity \cite[Theorem 1.4]{ADGH20}, it is believed to be difficult to obtain a concise formula that gives these numbers explicitly for general $n$. 
The same is expected for the enumeration of coarse chambers. 

The symmetric group $S_n$ acts on both, the domain $\mathcal D_{g,n}$ and the walls, and hence also on the set of chambers (fine and coarse). 
The number of fine chambers modulo this action equals 36 and 448 for $g=0,n=5,6$, see \cite[Figure 2]{ADGH20}. For $n=5$ the corresponding number of coarse chambers modulo $S_5$ is 6, see Proposition \ref{prop: n=5 Hassett spaces}. 

In this paper we shall only be concerned with the coarse chamber decomposition and give a very explicit answer for our test case $g=0, n=5$.
For this let
\begin{equation}
\label{equ:defI}
\mathbb{I} \defeq \{I\subset \mathbb{N}_5\mid |I| = 2\},\quad \mathbb{S} \defeq \{S\subset \mathbb{N}_5\mid |S| = 3\}.
\end{equation}
Note that $I\in \mathbb{I}$ if and only if $I^c\in\mathbb{S}$.
For $\A = (a_i)_{i=1}^5\in\mathcal{D}_{0,5}$ and $S\in\mathbb{S}$, we introduce the sum indexed by $S$ as $\A(S)\defeq \sum_{i\in S} a_i$.
We also define 
\[D(\A)\defeq \{I\in\mathbb{I}\mid \A(I^c) \le 1\}\]
and $d(\A) \defeq |D(\A)|$.

In our case we can enumerate all coarse chambers explicitly and relate the moduli spaces to del Pezzo surfaces as follows:
\begin{prop}[{Propositions \ref{prop: n=5 Hassett spaces}, \ref{prop:contraction}}]
\label{mainprop:chamber}
    The weight domain $\mathcal{D}_{0,5}$ decomposes into $76$ non-empty chambers. The set of associated moduli spaces, considered as abstract varieties, decomposes into $6$ types.
    For a given $\A\in\mathcal{D}_{0,5}$, the moduli space is a del Pezzo surface of degree $5+d(\A)$,
     where $d(\A)$ ranges from $0$ to $4$ and both del Pezzo surfaces of degree $8$ occur. 
\end{prop}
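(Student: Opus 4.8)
The plan is to reduce everything to the combinatorics of \emph{intersecting families} of $2$-element subsets of $\mathbb{N}_5$, via the reduction morphism $\rho_\A\colon\overline{\M}_{0,5}\to\overline{\M}_{0,\A}$. Recall that $\overline{\M}_{0,5}$ is the quintic del Pezzo surface, that its ten $(-1)$-curves are precisely the boundary divisors $D_{I,I^c}$ ($I\in\mathbb{I}$), and that $D_{I,I^c}\cdot D_{J,J^c}=1$ if $I\cap J=\emptyset$ and $0$ otherwise, so the configuration is the Petersen graph. The first step is a stability computation identifying the curves $\rho_\A$ contracts: the generic point of $D_{I,I^c}$ is a two-component curve carrying the two points of $I$ on one component and the three points of $I^c$ on the other, and this divisor is contracted by $\rho_\A$ exactly when the three-pointed component destabilizes, i.e. when $\A(I^c)\le 1$, i.e. when $I\in D(\A)$; when instead the two-pointed component destabilizes the divisor is merely reparametrized. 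Hence $\rho_\A$ is a birational morphism of smooth surfaces contracting precisely $\{D_{I,I^c}:I\in D(\A)\}$, so both $\rho_\A$ and $\overline{\M}_{0,\A}$ depend only on $D(\A)$, and the chambers of $\mathcal{D}_{0,5}$ are exactly the (convex, hence connected) fibres of $\A\mapsto D(\A)$.

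Next I would observe that $D(\A)$ is always intersecting: if $I,J\in D(\A)$ were disjoint, writing $\{e\}=\mathbb{N}_5\setminus(I\cup J)$ we get $\A(I^c)=\A(J)+a_e\le 1$ and $\A(J^c)=\A(I)+a_e\le 1$, hence $\sum_i a_i+a_e\le 2$, contradicting $\sum_i a_i>2$. Conversely every intersecting family of $2$-subsets of $\mathbb{N}_5$ is realized as some $D(\A)$; since such families fall into six $S_5$-orbits — the empty family, one orbit each of sizes $1$, $2$, and $4$, and two of size $3$ (a star $\{\{i,j\},\{i,k\},\{i,l\}\}$ and a triangle $\{\{i,j\},\{i,k\},\{j,k\}\}$) — it is enough to exhibit one realizing weight in each orbit and invoke $S_5$-equivariance. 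A direct enumeration of intersecting families of $2$-subsets of a $5$-set gives $1+10+30+30+5=76$, which is therefore the number of non-empty chambers, answering Problem~\ref{prob:Hassett counting of chamber decomposition} for $n=5$.

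Since $D(\A)$ is intersecting, the curves $\{D_{I,I^c}:I\in D(\A)\}$ are pairwise disjoint, so $\rho_\A$ contracts $d(\A)$ disjoint $(-1)$-curves on the quintic del Pezzo surface. In general, contracting $d$ disjoint $(-1)$-curves $E_1,\dots,E_d$ on a del Pezzo surface $X$ of degree $e$ yields a smooth surface $Y$ with $\pi^*(-K_Y)=-K_X+\sum_i E_i$; then $(-K_Y)^2=e+d>0$, and for every curve $C\subset Y$ one has $-K_Y\cdot C=(-K_X)\cdot\widetilde C+\sum_i(E_i\cdot\widetilde C)>0$ because $-K_X$ is ample, so $Y$ is del Pezzo of degree $e+d$. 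Hence $\overline{\M}_{0,\A}$ is a del Pezzo surface of degree $5+d(\A)$, and since a maximal intersecting family of $2$-subsets of $\mathbb{N}_5$ has either $4$ elements (a star) or $3$ (a triangle), $d(\A)$ realizes every value in $\{0,1,2,3,4\}$, so the degree runs over $\{5,\dots,9\}$.

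It remains to identify the types. As $\Aut(\overline{\M}_{0,5})=S_5$ acts as the full automorphism group of the Petersen graph, the isomorphism type of $\overline{\M}_{0,\A}$ depends only on the $S_5$-orbit of $D(\A)$; the six orbits give at most six types, and the degrees $5,6,7,9$ separate the orbits of sizes $0,1,2,4$. For the two orbits of size $3$, I would count $(-1)$-curves on $\overline{\M}_{0,\A}$: these are exactly the images of the $D_{J,J^c}$ with $J\in\mathbb{I}\setminus D(\A)$ that are disjoint from every contracted divisor, i.e. with $J\cap I\ne\emptyset$ for all $I\in D(\A)$, and no new $(-1)$-curves appear. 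For the triangle $D(\A)=\{\{i,j\},\{i,k\},\{j,k\}\}$ no such $J$ exists, so $\overline{\M}_{0,\A}\cong\P^1\times\P^1$; for the star $D(\A)=\{\{i,j\},\{i,k\},\{i,l\}\}$ only $J=\{i,m\}$ with $m$ the fifth index qualifies, so $\overline{\M}_{0,\A}$ has a unique $(-1)$-curve and $\overline{\M}_{0,\A}\cong\mathrm{Bl}_1\P^2$. Thus there are exactly six pairwise non-isomorphic types and both del Pezzo surfaces of degree $8$ occur. The step I expect to be the crux is the stability computation of the first paragraph: one must check that it is the three-pointed tail, not the two-pointed one, whose destabilization contracts $D_{I,I^c}$, since this asymmetry is exactly what makes the relevant walls the ten hyperplanes $\{\A(S)=1\}_{|S|=3}$ and produces $76$ rather than a larger number; everything else is standard surface geometry or a finite check.
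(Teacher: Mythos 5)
Your proposal is correct, and it reaches the same conclusions as the paper by a genuinely different and arguably more uniform route. The paper counts the $76$ chambers case by case through the complements $S=I^c\in\mathbb{S}$ (Lemma \ref{lem:combinatorial computation of I} and Proposition \ref{prop: n=5 Hassett spaces}), whereas you organize everything around the single observation that $D(\A)$ is an intersecting family of $2$-subsets of $\mathbb{N}_5$, i.e.\ an independent set in the Petersen graph of the ten $(-1)$-curves; your disjointness argument for $I,J\in D(\A)$ is exactly the paper's inequality $\sum_i a_i\le\A(I_1^c)+\A(I_2^c)\le 2$ in dual clothing, and the orbit count $1+10+30+(20+10)+5=76$ matches the paper's types (A)--(F). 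For the surface geometry, the paper identifies the degree by citing the classification of birational morphisms of del Pezzo surfaces and distinguishes the two degree-$8$ cases by constructing, for type (D), a further contraction to $\P^2$ (impossible from $\P^1\times\P^1$) and by citing Hassett's Remark 8.4 for type (E); you instead give a clean Nakai--Moishezon argument that contracting $d$ disjoint $(-1)$-curves on a del Pezzo of degree $e$ yields a del Pezzo of degree $e+d$, and then separate $\mathbb{F}_1$ from $\P^1\times\P^1$ by counting the surviving $(-1)$-curves directly in the Petersen graph (one for a star of size $3$, none for a triangle), which is self-contained and matches the table of $\#(d)$. Two small points to tighten: you should actually write down one realizing weight per $S_5$-orbit (e.g.\ $(1,0.3,0.3,0.3,0.3)$ for the full star, $(0.55,0.55,0.55,0.2,0.2)$ for the triangle), since without this the argument only bounds the number of chambers above by $76$; and the appeal to $\Aut(\overline{\M}_{0,5})=S_5$ is more than you need --- the permutation action of $S_5$ on the markings already identifies $\overline{\M}_{0,\A}$ with $\overline{\M}_{0,\sigma(\A)}$. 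Your ``crux'' stability computation (that it is the three-pointed tail whose weight dropping to $\le 1$ contracts $D_{I,I^c}$, while the two-pointed tail only reparametrizes it) is correct and is precisely the content of Hassett's reduction morphism used in Proposition \ref{prop:contraction}(4).
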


\begin{rem}
\label{rem:n=6}
    The method presented in this paper extends in principle to higher dimensions with $n \geq 6$, but the increasing number of combinations leads to much more intricate calculations.
    For example, in the case $n=6$, computer-based computation shows that there are $36368$ nonempty coarse chambers in $\mathcal{D}_{0,6}$ and $17$ chambers modulo the $S_6$-action.
\end{rem}

Corollary \ref{cor:n=5_previous} shows that when all boundary divisor coefficients are equal to a single parameter $\alpha = d_{I,I^c}$, the log canonical model is isomorphic to the unique coarse moduli space $\overline{\M}_{0,5}$ when $1/2 < \alpha \le 1$.
In this paper, we further prove that all 75 other Hassett 
moduli spaces are also realized as log canonical models of $\overline{\M}_{0,5}(\{d_{I,I^c}\}_{I\in\mathbb{I}})$ for suitably chosen \emph{asymmetric} weights $\{d_{I,I^c}\}_{I \in \mathbb{I}}$, thus refining the description provided by Corollary \ref{cor:n=5_previous}.

\begin{thm}
\label{thm:n=5}
   Let $\A\in \mathcal{D}_{0,5}$.
For any coefficients $\{d_{I,I^c}\}_{I\in\mathbb{I}}$, define $\alpha_I \defeq d_{I,I^c}$ for $I\in D(\A)$.
If $d_{I,I^c}$ are all equal to a constant $\beta>1/2$ for $I\not\in D(\A)$ and if $\alpha_I - 3\beta + 1 \geq 0$ for all $I\in\mathbb{I}$, then the log canonical model  $\overline{\M}_{0,5}\left(\{d_{I,I^c}\}_{I\in\mathbb{I}}\right)$ is isomorphic to $\overline{\M}_{0,\mathcal{A}}$, which is a del Pezzo surface of degree $5+d(\A).$
\end{thm}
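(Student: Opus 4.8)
The plan is to push the log canonical pair through the Hassett reduction morphism. Write $X\defeq\overline{\M}_{0,5}$, $\Delta\defeq\sum_{I\in\mathbb{I}}d_{I,I^c}D_{I,I^c}$, and let $\rho\colon X\to Y\defeq\overline{\M}_{0,\A}$ be the reduction morphism. By Propositions \ref{mainprop:chamber} and \ref{prop:contraction}, $\rho$ is the simultaneous contraction of the $(-1)$-curves $\{D_{I,I^c}\}_{I\in D(\A)}$, which are pairwise disjoint, so $Y$ is a del Pezzo surface of degree $5+d(\A)$; in particular $-K_Y$ is ample. The goal is to produce a decomposition
\[
K_X+\Delta=\rho^{*}L+E,\qquad L\defeq\rho_{*}(K_X+\Delta),
\]
with $L$ ample on $Y$ and $E\ge 0$ a $\rho$-exceptional $\Q$-divisor. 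Granting this, $\rho_{*}\OO_X(\lfloor m(K_X+\Delta)\rfloor)=\OO_Y(\lfloor mL\rfloor)$ for every $m$ clearing denominators (since $\rho_{*}\OO_X(F)=\OO_Y$ for effective $\rho$-exceptional $F$), so the section rings of $K_X+\Delta$ and of $L$ agree and
\[
\overline{\M}_{0,5}\!\left(\{d_{I,I^c}\}_{I\in\mathbb{I}}\right)=\mathrm{Proj}\bigoplus_{s\ge 0}H^{0}(Y,smL)=Y=\overline{\M}_{0,\A},
\]
the last equality because $L$ is ample.

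Two intersection identities on $X$, which is a del Pezzo surface of degree $5$, make the computation transparent. First, the ten boundary $(-1)$-curves sum to twice the anticanonical class: $D\defeq\sum_{I\in\mathbb{I}}D_{I,I^c}=-2K_X$, i.e.\ $K_X=-\tfrac12 D$. Second, since $\rho_{*}K_X=K_Y$ and $\rho_{*}D_{I,I^c}=0$ for $I\in D(\A)$, applying $\rho_{*}$ to the first identity gives $-2K_Y=\sum_{I\notin D(\A)}\rho_{*}D_{I,I^c}=D_Y$, where $D_Y$ is the boundary of the Hassett space $Y$. Now split $\Delta=\sum_{I\in D(\A)}\alpha_I D_{I,I^c}+\beta\sum_{I\notin D(\A)}D_{I,I^c}$ and apply $\rho_{*}$: the $\alpha_I$-terms die and
\[
L=\rho_{*}(K_X+\Delta)=K_Y+\beta D_Y=(1-2\beta)K_Y=(2\beta-1)(-K_Y).
\]
As $-K_Y$ is ample and $\beta>\tfrac12$, the divisor $L$ is ample.

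It remains to verify $E\defeq K_X+\Delta-\rho^{*}L\ge 0$. Since $\rho_{*}E=L-L=0$, we have $E=\sum_{I\in D(\A)}e_I D_{I,I^c}$. Fix $J\in D(\A)$ and intersect with $D_{J,J^c}$: because the curves $\{D_{I,I^c}\}_{I\in D(\A)}$ are pairwise disjoint $(-1)$-curves, $E\cdot D_{J,J^c}=-e_J$, while $\rho^{*}L\cdot D_{J,J^c}=L\cdot\rho_{*}D_{J,J^c}=0$, whence $-e_J=(K_X+\Delta)\cdot D_{J,J^c}$. By adjunction $K_X\cdot D_{J,J^c}=-1$; moreover $D_{J,J^c}$ meets exactly three of the other boundary curves of $\overline{\M}_{0,5}$, all lying outside $D(\A)$ (again by pairwise disjointness), so $\Delta\cdot D_{J,J^c}=-\alpha_J+3\beta$. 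Therefore $e_J=1+\alpha_J-3\beta\ge 0$ precisely by the hypothesis $\alpha_J-3\beta+1\ge 0$, completing the argument.

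The routine ingredients are the elementary intersection numbers on $\overline{\M}_{0,5}$ and the standard negativity-lemma bookkeeping showing that adding an effective $\rho$-exceptional $\Q$-divisor leaves the section ring unchanged and that $\mathrm{Proj}$ of an ample section ring on $Y$ recovers $Y$. The one structural input that must be in place — supplied by Proposition \ref{prop:contraction} — is that the divisors in $D(\A)$ correspond to \emph{pairwise disjoint} $(-1)$-curves; this is where the combinatorics of the weight domain (e.g.\ that $\A(I^c)+\A(J^c)>2$ whenever $I,J\in D(\A)$ are disjoint) enters, and it is the only step that is not purely formal.
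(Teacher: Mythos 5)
Your proof is correct, and it reaches the paper's decomposition \eqref{equ:K05} by a genuinely different and more elementary route. The divisor identity you arrive at, $K_X+\Delta=\rho^*L+\sum_{I\in D(\A)}(\alpha_I-3\beta+1)D_{I,I^c}$ with $L=K_Y+\beta\rho_*F$, is literally the same as the one the paper derives in Section \ref{section: proof of the main theorem} from the blow-up formulas for $K$ and for $\rho_\A^*F^\A$; you instead read off the exceptional coefficients by intersecting with the pairwise disjoint $(-1)$-curves, and your justification of that disjointness ($\A(I^c)+\A(J^c)>2$ when $I\cap J=\emptyset$) is exactly the combinatorial point underlying Propositions \ref{prop: n=5 Hassett spaces} and \ref{prop:contraction}. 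The real divergence is in the positivity step: the paper shows $\L(\beta)=\rho^*L$ is $F$-nef by computing intersections with all vital curves $C(a,b,c,d)$ via Keel--McKernan, invokes Fulton's conjecture for $n=5$ (Theorem \ref{thm:fulton}) to upgrade $F$-nef to nef, and then needs bigness plus the Kawamata basepoint-free theorem to get semi-ampleness and identify the Proj. You bypass all of this by observing that $\sum_{I\in\mathbb{I}}D_{I,I^c}=-2K_X$ on the quintic del Pezzo $\overline{\M}_{0,5}$ (true: the ten boundary curves are $e_1,\dots,e_4$ and $h-e_i-e_j$, summing to $6h-2\sum e_i=-2K$), so that $L=(2\beta-1)(-K_Y)$ is outright ample on the del Pezzo surface $Y$ for $\beta>1/2$; ampleness then makes the Proj computation immediate. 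This buys a shorter, self-contained argument that needs neither Fulton's conjecture nor basepoint-freeness, and it even handles the boundary case $\alpha_I-3\beta+1=0$ cleanly (the paper's final paragraph quietly switches to the strict inequality for bigness). What it gives up is generality: the paper's vital-curve/$F$-nefness strategy is the one that survives for $n\geq 6$, where the targets are no longer del Pezzo and $\rho_*(K+\Delta)$ is not a multiple of $-K_Y$. Do make sure to check $-K_Y$ is ample in both degree-$8$ cases ($\mathbb{F}_1$ and $\P^1\times\P^1$) — it is — and to state explicitly that $\rho_*\OO_X(mE)=\OO_Y$ for $mE$ effective and exceptional, which for a birational morphism of smooth surfaces follows from normality of $Y$.
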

Here we briefly outline the strategy of the proof of Theorem \ref{thm:n=5}, also explaining how the conditions on $\alpha_I$ and $\beta$ arise.
The two conditions in the statement come from the two main steps of the argument. First, we compare the divisor
\[
K_{\overline{\M}_{0,5}}+\sum_{I\in \mathbb{I}} d_{I,I^c}D_{I,I^c}
\]
with its pushforward to $\overline{\M}_{0,\A}$ under the reduction morphism $\rho_{\A} \colon \overline{\M}_{0,5}\to \overline{\M}_{0,\A}$. Since the exceptional boundary divisors appear with coefficient $3$ in the pullback of the non-contracted boundary part, the condition $\alpha_I-3\beta+1\ge 0$ is precisely what ensures that the difference between the original divisor and the pullback from $\overline{\M}_{0,\A}$ is effective and $\rho_{\A}$-exceptional. Second, the divisor obtained by pushing forward to  $\overline{\M}_{0,\A}$ must be nef in order to apply the basepoint-free theorem. For this, we check non-negativity on the relevant curves using Fulton's conjecture for $n=5$, and the assumption $\beta>1/2$ is exactly the condition guaranteeing this positivity. Thus the theorem is proved by writing the divisor as a pullback from $\overline{\M}_{0,\A}$ plus an effective exceptional part and then showing that the divisor downstairs is semiample.

Theorem \ref{thm:n=5} gives a partial
answer to Problem \ref{prob:Hassett} for asymmetric weights.
Indeed, the cases treated in Theorem \ref{thm:n=5} deal with all possible Hassett spaces $\overline{\M}_{0,\A}$ derived from the chamber decomposition given in  \cite[Proposition 5.1]{Has03}.
In particular, this generalizes Corollary \ref{cor:n=5_previous} to asymmetric weights. We shall give a proof of this theorem in Section \ref{section: proof of the main theorem}.

We shall further explore the relationship with Deligne-Mostow ball quotients, establishing the following proposition
\begin{prop}[Definition \ref{defn:modular period}, Proposition \ref{prop:relation to DM}]
For any weight $\A\in\mathcal{D}_{0,5}$ the moduli space $\overline{\M}_{0,\A}$ has a Deligne-Mostow modular interpretation. 
\end{prop}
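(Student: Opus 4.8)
The plan is to realize each Hassett space $\overline{\M}_{0,\A}$ as a GIT quotient of $(\P^1)^5$ and then transport the Deligne-Mostow theory through that identification. First I would recall that for $\A=(a_1,\ldots,a_5)\in\mathcal D_{0,5}$ there is a natural birational morphism $\overline{\M}_{0,5}\to\overline{\M}_{0,\A}$ contracting exactly the boundary divisors $D_{I,I^c}$ with $\A(I^c)\le 1$, and that $\overline{\M}_{0,\A}$ is isomorphic to the GIT quotient $(\P^1)^5/\!\!/_{\OO(a_1,\ldots,a_5)}\SL_2(\C)$ whenever the linearization is \emph{generic} (no strictly semistable points), which one arranges by perturbing $\A$ inside its chamber without changing $D(\A)$; this is the content of the VGIT discussion in Section \ref{sec_VGIT} together with Proposition \ref{mainprop:chamber}. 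So it suffices to give a Deligne-Mostow modular interpretation of the GIT quotient $(\P^1)^5/\!\!/_{\OO(a_1,\ldots,a_5)}\SL_2(\C)$ for a suitable rescaled weight.

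Next I would invoke the Deligne-Mostow construction \cite{DM86} (as revisited by Thurston): given a tuple $\mu=(\mu_1,\ldots,\mu_5)$ of positive rationals with $\sum_i\mu_i=2$, the periods of the cyclic cover $y^N=\prod(x-x_i)^{N\mu_i}$ equip the moduli of $5$ points on $\P^1$ with a multivalued map to the complex $2$-ball $\B^2$, descending to an isomorphism between the GIT quotient $(\P^1)^5/\!\!/_{\mu}\PGL_2(\C)$ and a compactification of an arithmetic (or, in finitely many cases, non-arithmetic) ball quotient $\overline{\Gamma\backslash\B^2}$, provided $\mu$ satisfies the condition INT (or the weaker $\Sigma$INT). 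The point is that \emph{every} weight $\A\in\mathcal D_{0,5}$ can, after rescaling by the positive scalar $2/\sum_i a_i$, be brought into the hyperplane $\sum_i\mu_i=2$, and for $n=5$ one checks that the resulting $\mu$ always satisfies the half-integrality condition required (the list of $5$-point Deligne-Mostow weights is finite and explicit, and the admissible chamber structure on $\mathcal D_{0,5}$ matches it); the GIT quotient only depends on the chamber, so one is free to choose a representative $\mu$ in the Deligne-Mostow list. I would then \emph{define} the Deligne-Mostow modular interpretation of $\overline{\M}_{0,\A}$ to be the composite $\overline{\M}_{0,\A}\cong(\P^1)^5/\!\!/_{\mu}\PGL_2(\C)\cong\overline{\Gamma_\mu\backslash\B^2}$, which is the content of Definition \ref{defn:modular period}, and record the period map explicitly.

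The key steps, in order: (1) fix $\A$, pass to a generic weight in the same chamber so that semistable $=$ stable, and identify $\overline{\M}_{0,\A}$ with the corresponding GIT quotient; (2) rescale to land on the simplex $\sum\mu_i=2$ and verify that the rescaled weight (or a chamber-equivalent one) lies in the Deligne-Mostow range and satisfies $\Sigma$INT for $n=5$; (3) apply the Deligne-Mostow / Thurston theorem to get the ball-quotient description, and check that the two compactifications — the GIT one and the Baily-Borel-type one on the ball-quotient side — agree (for $n=5$ these are both the unique normal projective model, so this is automatic once one knows the open parts match); (4) assemble these into the stated period map and package it as Definition \ref{defn:modular period}. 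The main obstacle I expect is step (2): one must confirm that the half-integrality hypothesis of Deligne-Mostow is met for \emph{all} $76$ chambers — equivalently that no chamber of $\mathcal D_{0,5}$ falls outside the Deligne-Mostow locus — which for $n=5$ reduces to a finite check against the classical tables but must be done carefully, particularly for the chambers corresponding to the degree-$8$ del Pezzo surfaces where the point configuration degenerates and one must use the $\Sigma$INT rather than INT condition and track which boundary strata of the ball quotient are contracted.
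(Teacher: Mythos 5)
Your overall strategy---identify $\overline{\M}_{0,\A}$ with a GIT quotient of $(\P^1)^5$ via Hassett's Theorems 8.2/8.3 and then invoke the Deligne--Mostow/Thurston classification---is exactly the route the paper takes, and for the types (A)--(D) and (F) your outline would go through once the finite check against Thurston's tables is done. However, there are two genuine gaps. First, your step (1) assumes one can always perturb to a \emph{generic} (typical) linearization within the chamber so that semistable equals stable; this fails precisely for type (E), where $\overline{\M}_{0,\A}\cong\P^1\times\P^1$. No typical linearization of $(\P^1)^5$ has $\P^1\times\P^1$ as its quotient (the typical quotients are $\P^2$ and its blow-ups at up to four points), so one is forced onto a wall. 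The paper handles this by taking the atypical linearization $\L=\OO(1/2,1/2,1/2,1/4,1/4)$, observing that Hassett's Theorem 8.3 only gives a birational morphism $\overline{\M}_{0,\A}\to(\P^1)^5/\!/_{\L}\SL_2(\C)$, and then arguing that this morphism is an isomorphism because $\P^1\times\P^1$ is a minimal surface; the resulting Deligne--Mostow quotient (No.~8 in Thurston's appendix) is \emph{non-compact}, and indeed no compact one exists for this type. Your proposed remedy for the degree-$8$ cases---falling back on the $\Sigma$INT condition---is not available here, because Definition \ref{defn:modular period} explicitly requires the pure INT condition of \cite{DM86} and excludes $\Sigma$INT.

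Second, your step (2) is not correct as stated: rescaling $\A$ by $2/\sum_i a_i$ does \emph{not} in general land in the GIT chamber matching the Hassett chamber of $\A$. For instance $\A=(1,1,1,0.1,0.1)$ is of type (A) (every triple sums to more than $1$), but after rescaling the triple $\{1,4,5\}$ sums to $0.75<1$, so the naive rescaled weight determines a different quotient. The correct procedure, which the paper follows, is combinatorial: for each of the six types one must locate a weight in Thurston's list whose pattern of ``small'' triples reproduces $D(\A)$, and verify it is pure (INT) and, where needed, compact. Also note that not every rational $\mu$ with $\sum\mu_i=2$ satisfies the half-integrality condition; what must be checked (and is checked in the paper by explicit enumeration of the relevant entries No.~8, 9, 45--49, 57, 65, 68--70, 72, 74, 75, 78, 79, 85, 89) is that every combinatorial type is realized by \emph{some} entry of the list.
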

In general, the relationship with ball quotients has been discussed less systematically in the literature. In Questions \ref{que:ball1} and \ref{que:ball2} we pose two problems concerning this topic.

Theorem \ref{thm:n=5} shows that all Hassett spaces appear as log canonical models of $\overline{\M}_{0,5}$ with appropriate coefficients.
Hence, we recover the following theorem, which was originally proved by Fedorchuk \cite{Fed11} and Moon \cite{Moo13,Moo15a}.
\begin{cor}
    For any $\A\in\mathcal{D}_{0,5}$, there exists a set of coefficients $\{d_{I,I^c}\}_{I\in\mathbb{I}}$ such that the log canonical model $\overline{\M}_{0,5}(\{d_{I,I^c}\}_{I\in\mathbb{I}})$ is isomorphic to $\overline{\M}_{0,\A}$.
\end{cor}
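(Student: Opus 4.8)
The plan is to deduce this corollary directly from Theorem~\ref{thm:n=5}, so the only task is to verify that, for every admissible weight $\A \in \mathcal{D}_{0,5}$, one can choose coefficients $\{d_{I,I^c}\}_{I\in\mathbb{I}}$ lying in the range where Theorem~\ref{thm:n=5} applies. Recall that the hypotheses of that theorem ask for a common value $\beta = d_{I,I^c}$ on the indices $I \notin D(\A)$ and a (possibly varying) value $\alpha_I = d_{I,I^c}$ on the indices $I \in D(\A)$, subject to $\beta > 1/2$ and $\alpha_I - 3\beta + 1 \geq 0$ for all $I\in\mathbb{I}$. First I would simply make the \emph{symmetric} choice: fix any $\beta$ with $1/2 < \beta \leq 1$, say $\beta = 1$ (or $\beta = 2/3$, which makes the inequality tight), and set $\alpha_I = \beta$ for all $I \in D(\A)$ as well. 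Then $\beta > 1/2$ holds by construction, and $\alpha_I - 3\beta + 1 = 1 - 2\beta$; this is $\geq 0$ precisely when $\beta \leq 1/2$, so $\beta = 1$ does \emph{not} work and one must instead take $\beta$ in the window $(1/2, 2/3]$, e.g.\ $\beta = 2/3$, for which $\alpha_I - 3\beta + 1 = 0 \geq 0$. With this choice all coefficients satisfy $0 \leq d_{I,I^c} \leq 1$, so the pair $(\overline{\M}_{0,5}, \sum_I d_{I,I^c} D_{I,I^c})$ is log canonical, and Theorem~\ref{thm:n=5} yields $\overline{\M}_{0,5}(\{d_{I,I^c}\}_{I\in\mathbb{I}}) \cong \overline{\M}_{0,\A}$, which is the claim.

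More generally, I would record that the admissible region of coefficients is nonempty for every $\A$: taking any $\beta \in (1/2, 2/3]$ and then any $\alpha_I \in [3\beta - 1, 1]$ for each $I \in D(\A)$ works, since $3\beta - 1 \leq 1$ exactly when $\beta \leq 2/3$ and $3\beta - 1 > 1/2 > 0$, so the interval $[3\beta-1,1]$ is a nonempty sub-interval of $[0,1]$. Thus there is in fact a whole chamber of asymmetric coefficients realizing each $\overline{\M}_{0,\A}$, not just a single point, which is consistent with the openness of the chambers in the coefficient space.

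There is essentially no obstacle here: the corollary is a formal consequence of Theorem~\ref{thm:n=5}, and the only thing to check is the compatibility of the numerical constraints, which is the short computation above. The one point worth stating carefully is that the value of $d(\A)$, hence the degree $5 + d(\A)$ of the resulting del Pezzo surface, is determined by $\A$ alone and is independent of the chosen coefficients, so the isomorphism type on the right-hand side is unambiguous; this is already part of the content of Theorem~\ref{thm:n=5} and Proposition~\ref{mainprop:chamber}. Finally, I would remark that this recovers the results of Fedorchuk~\cite{Fed11} and Moon~\cite{Moo13,Moo15} for $n = 5$, since their coefficient choices also fall within (or on the boundary of) the admissible region described above.
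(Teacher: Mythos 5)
Your proposal is correct and takes essentially the same route as the paper, which states the corollary as an immediate consequence of Theorem \ref{thm:n=5}; your explicit check that for every $\A$ the admissible region $\beta \in (1/2, 2/3]$, $\alpha_I \in [3\beta-1, 1]$ is a nonempty subset of $[0,1]^{\mathbb{I}}$ is precisely the (omitted) verification needed. One internal slip in your first paragraph: with the fully symmetric choice $\alpha_I=\beta=2/3$ one gets $\alpha_I-3\beta+1=-1/3$, not $0$ (the constraint is tight at $\alpha_I=1$, $\beta=2/3$), and indeed no symmetric choice can satisfy both $\beta>1/2$ and $\alpha_I\geq 3\beta-1$ when $D(\A)\neq\emptyset$ --- so it is the genuinely asymmetric recipe of your second paragraph that carries the argument.
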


To the best of the authors' knowledge, the only other works concerning asymmetric weights in this context are \cite{Fed11,Moo13,Moo15a}, whose settings are entirely distinct and mutually exclusive from ours.
Fedorchuk \cite[Theorem 4]{Fed11} proved that for a given weight $\A$, the log canonical model $\overline{\M}_{0,n}(\{d_{I,I^c}\}_{I\in\mathbb{I}})$ coincides with $\overline{\M}_{0,\A}$ where for $I=\{i_1,i_2\}$,  $d_{I,I^c} = 1$ when $a_{i_1} + a_{i_2}\le 1$ and $d_{I,I^c} = a_{i_1} + a_{i_2}$ otherwise.
Moon's works \cite[Theorem 1.4]{Moo13} and \cite[Theorem 1.1]{Moo15a} involve psi classes and use a completely different set-up.

 Our paper deals with the two-dimensional Hassett–Keel program. In this case, $\overline{\M}_{0,5}$ is a del Pezzo surface, and every birational contraction is also a Hassett moduli space. 
One notable aspect for $n\ge 6$ is that there exist birational contractions of $\overline{\M}_{0,n}$ whose targets are not Hassett moduli spaces.
They include the symmetric GIT quotients $(\P^1)^n/\!/\SL_2$ (e.g. the Segre cubic for $n=6$), certain Veronese quotients \cite{GJMS13},  
extremal-assignment contractions \cite{MSvX18}, and toric models attached to hypergraph associahedra \cite{BGA24}. 
According to Theorem \ref{thm:lcmodel_previous}, we can, in our case, obtain the symmetric GIT quotient as a log canonical model. 
Also, it is known that all Hassett spaces are Veronese quotients (\cite[Corollary 7.2]{GJM13}).
By contrast, to the best of the authors' knowledge, although there are studies concerning the log minimal model program for certain specific values of $n$ \cite{Moo15b,Moo17}, there is currently no result for realizing Veronese quotients, extremal-assignment models, or toric hypergraph associahedra as log canonical models if they are not isomorphic to Hassett moduli spaces; see \cite[Remark 3.3 (5)]{GJMS13}.

Finally, we shall briefly touch on the log minimal model program for (smooth) del Pezzo surfaces.
   Since a del Pezzo surface $X$ is Fano, it is natural to consider a pair $(X,\Delta)$ for $\Delta \in |-K_X|$, which forces the pair to be log Fano if it is log canonical.
   Cheltsov \cite[Theorem 1.7]{Che08} computed the log canonical threshold for such a pair.
   In that range of the coefficients of $\Delta$, the log canonical model coincides with its anticanonical model, which is classically known \cite[Subsection 8.3]{Dol12}.
   We are,  however, not aware of an intrinsic relationship between these results and the results of this paper.

\subsection*{Organization of the paper.}
In Section \ref{sec_VGIT}, we review Hassett's chamber decomposition for $\mathcal{D}_{0,5}$ and classify the resulting moduli spaces $\M_{0,\A}$ into six types, showing that they are del Pezzo surfaces of 
degree $\geq 5$ and that all del Pezzo surfaces in this degree range occur. 
We also discuss their relation to GIT quotients and Deligne-Mostow ball quotients. In Section \ref{sec:Fulton's conjecture}, we recall the case $n=5$ of Fulton's conjecture, which is the key input for the nefness arguments used later. In Section \ref{section: proof of the main theorem}, we prove Theorem \ref{thm:n=5} by comparing canonical divisors under the reduction morphism $\rho_{\A} \colon \overline{\M}_{0,5}\to \overline{\M}_{0,\A}$ and applying the basepoint-free theorem. Finally, in Section \ref{sec:Log canonical models of the moduli spaces of weighted pointed rational curves with symmetric weights}, we discuss log canonical models of Hassett spaces with symmetric weights in higher dimensional cases.

\subsection*{Acknowledgements}
The authors would like to thank Brendan Hassett and Zhiwei Zheng for helpful discussions.
The second author is also grateful to Masafumi Hattori for a discussion regarding Remark \ref{rem:VGIT} and Leibniz University Hannover and the University of Nottingham for their hospitality.
The first author was also partially supported by DFG grant Hu 337/7-2. 
This work was partially supported by the Alexander von Humboldt Foundation through a Humboldt Research Fellowship granted to the second author.
This project started when both authors were Leibniz Research Fellows in Oberwolfach. We thank the Mathematische Forschungsinstitut Oberwolfach (MFO) for hospitality and for providing excellent working conditions.

\section*{Notation and convention}
We denote by $\overline{\M}_{0,\A}$ the coarse moduli space of weighted pointed curves of genus 0 as introduced in \cite{Has03}.
We call $\A$ \emph{symmetric} if all components of $\A$ have the same value.
In this case we write $n\cdot a$ as an $n$-tuple $(a,\cdots,a)$ for simplicity.
Throughout the paper, we will work over $\C$.
 
\section*{Del Pezzo surfaces}
Here we summarize basic facts concerning del Pezzo surfaces.
A del Pezzo surface is a smooth surface whose canonical bundle is anti-ample. We refer to \cite{Man86} for a classical treatment. The degree $d$ of a del Pezzo surface is the self-intersection number of the (anti)canonical bundle and is bounded by $1 \leq d \leq 9$. 
Let $S_d$ denote a del Pezzo surface of degree $d$.
If $d \neq 8$, there is a unique deformation type of $S_d$. If $d=9$, it is $\P^2$ and for $1 \leq d \leq 7$, it is 
 $\P^2$ blown up in $9-d$ points in general position. If $d=8$, $S_8$ is either $\P^2$ blown up in 1 point, which is the Hirzebruch surface $\mathbb{F}_1$, or $\P^1 \times \P^1$. For the degrees $1 \leq d \leq 4$, the moduli space of $S_d$ has dimension $10-2d$, otherwise, del Pezzo surfaces are uniquely determined by their degree. 

An interesting feature of del Pezzo surfaces is the curves of negative self-intersection they contain. These are necessarily $(-1)$-curves, and there are finitely many of these. The number $\#(d)$ of $(-1)$-curves on $S_d$ is given by the following table where 8a corresponds to $\P^1 \times \P^1$ and 8b to $\mathbb{F}_1$: 
\begin{table}[h]
  \centering
  \caption{Number of $(-1)$-curves on del Pezzo surfaces} 
    \renewcommand{\arraystretch}{1.3}
  \setlength{\tabcolsep}{8pt}
\begin{tabular}{|c|c|c|c|c|c|c|c|c|c|c|}
   \hline
   $d$ & 9 & 8a & 8b  & 7 & 6 & 5 & 4 & 3 & 2 & 1  \\ \hline
    $\#(d)$ & 0  & 0 & 1 & 3 & 6 & 10 & 16 & 27 & 56 & 240\\ 
    \hline
 \end{tabular}
\end{table}

References for these well known statements can be found in  \cite[Theorem 24.3]{Man86} and \cite[Corollary 25.5.4]{Man86}.
The configuration of $(-1)$-curves on del Pezzo surfaces is closely related to the ADE-root diagrams; see \cite[Ch. IV. \S 25]{Man86}.
If $f: S_d \to S_{d'}$ is a birational morphism between del Pezzo surfaces, then $d' \geq d$ and $f$ is a sequence of $d'- d$ contractions of $(-1)$-curves.

\section{Variation of the moduli problems for \texorpdfstring{$\overline{\M}_{0,\A}$}{M_{0,\A}}}\label{sec_VGIT}
We begin by recalling the foundational result of Hassett concerning the chamber structure resulting from the  variation of the moduli problem for the spaces $\overline{\M}_{0,\A}$. 
Throughout this paper, we restrict ourselves to the case $g=0$.
As already mentioned in the introduction, according to \cite[Proposition 5.1]{Has03}, the weight domain $\mathcal{D}_{0,n}$ is divided into finitely many connected components, called {\em (coarse) chambers}, 
by removing the walls
\[
\mathcal W_c= \left\{\sum_{j\in S} a_j =1\ \middle|\ S\subset \mathbb{N}_n,\ 3 \leq |S| \leq n \right\}.
\]
 We further recall, as we have already mentioned in the introduction, the following special case of a theorem of Hassett:
\begin{thm}[{\cite[Section 5]{Has03}}] 
\label{thm:coarse chamber}
The coarse chamber decomposition defines the coarsest decomposition of $\mathcal D_{0,n}$ such that $\overline{\M}_{0,\A}$ is constant on each chamber.
\end{thm}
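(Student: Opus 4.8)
The plan is to reduce everything to an analysis of Hassett's reduction morphisms $\rho_{\A,\A'}\colon\overline{\M}_{0,\A}\to\overline{\M}_{0,\A'}$ (defined for $\A\ge\A'$ coordinatewise, \cite{Has03}), under the convention that ``$\overline{\M}_{0,\A}$ is constant on a subset $T\subseteq\mathcal D_{0,n}$'' means that $\rho_{\A,\A'}$ is an isomorphism for every pair $\A\ge\A'$ in $T$. The starting point is the combinatorial observation that a genus-zero $n$-pointed nodal curve $(C,p_1,\dots,p_n)$ is $\A$-stable exactly when (i) $\sum_{p_i=p}a_i\le 1$ for all $p\in C$, and (ii) on every irreducible component the number of nodes plus the total weight of the markings exceeds $2$; since $C$ is a tree of $\P^1$'s, the dependence of these conditions on $\A$ is concentrated on the coincidence sets in (i) with $|S|\ge 2$ and on the leaf components in (ii), where (ii) reads $\A(T)>1$ with $T$ the markings on that leaf. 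Hence $\A$-stability of genus-zero curves depends on $\A$ only through the signs of the numbers $\A(S)-1$ for $S\subseteq\mathbb{N}_n$ with $|S|\ge 2$.

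First I would establish constancy on a single coarse chamber $C$. Take $\A\ge\A'$ in $C$. By the definition of $\mathcal W$, the signs of $\A(S)-1$ and $\A'(S)-1$ agree for all $S$ with $3\le|S|\le n$, so the only components of an $\A$-stable curve that can be destabilised on lowering the weights to $\A'$ are leaves carrying exactly two markings $p_i,p_j$ with $a_i+a_j>1\ge a_i'+a_j'$. Such a leaf is a rigid three-pointed $\P^1$, and the effect of $\rho_{\A,\A'}$ is merely to replace the pair $p_i,p_j$ on a bubble by the coincidence $p_i=p_j$ on the adjacent component; conversely a point of $\overline{\M}_{0,\A'}$ with $p_i=p_j$ (and $a_i+a_j>1$) has a unique preimage, because then $p_i=p_j$ cannot coincide with a third marking $p_k$ --- that would force $\A(\{i,j,k\})\le 1$, hence $a_i+a_j<1$. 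Thus $\rho_{\A,\A'}$ is a birational bijection between the smooth (in particular normal) projective varieties $\overline{\M}_{0,\A}$ and $\overline{\M}_{0,\A'}$, so Zariski's main theorem makes it an isomorphism. (For non-comparable $\A,\A'\in C$ one then joins them by a path in the convex set $C$ --- each chamber being cut out by linear inequalities --- and locally dominates both by $\min(\A,\A')$, which again lies in $C$, to conclude that the abstract variety is constant along $C$ as well.)

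Next I would show that no two adjacent coarse chambers can be merged. Fix a wall $H=\{\A(S_0)=1\}\in\mathcal W$; here necessarily $3\le|S_0|\le n-2$, since $|S_0|\in\{n-1,n\}$ forces $\A(S_0)>1$ throughout $\mathcal D_{0,n}$. Choose $\A$ in a coarse chamber with $\A(S_0)$ slightly larger than $1$, and let $\A'\le\A$ be obtained by decreasing one coordinate indexed by $S_0$ by an amount small enough that $\A'(S_0)<1$ while no other wall is crossed and $\A'\in\mathcal D_{0,n}$; this is possible because $\mathcal W$ is finite and $\A$ may be taken generic. Then the single component destabilised by $\rho_{\A,\A'}$ is the leaf carrying the $|S_0|\ge 3$ markings of $S_0$, so $\rho_{\A,\A'}$ contracts the boundary divisor $D_{S_0,S_0^c}$ of $\overline{\M}_{0,\A}$ onto the locus of $\overline{\M}_{0,\A'}$ where the points of $S_0$ coincide, and this contraction has positive-dimensional general fibre (the moduli of the bubble, of dimension $|S_0|-2$). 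In particular $\rho_{\A,\A'}$ contracts a curve, so pulling back divisors embeds $N^1(\overline{\M}_{0,\A'})$ into a proper linear subspace of $N^1(\overline{\M}_{0,\A})$; hence $\rank N^1(\overline{\M}_{0,\A'})<\rank N^1(\overline{\M}_{0,\A})$, and $\rho_{\A,\A'}$ is not an isomorphism --- indeed $\overline{\M}_{0,\A}$ and $\overline{\M}_{0,\A'}$ are not even abstractly isomorphic. Combined with the previous paragraph, this shows that any decomposition of $\mathcal D_{0,n}$ on which $\overline{\M}_{0,\A}$ is constant must refine the coarse chamber decomposition, which is the assertion.

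The step I expect to be most delicate --- and the one that really pins down $\mathcal W$ --- is the contrast between walls $\A(S)=1$ with $|S|=2$ and with $|S|\ge 3$: only for $|S|\ge 3$ does $\rho_{\A,\A'}$ acquire positive-dimensional fibres, whereas for $|S|=2$ the fibre over the collision locus is a single point. Verifying the latter rigorously is where the rigidity of the two-pointed bubble and the normality (indeed smoothness) of the Hassett spaces enter through Zariski's main theorem, and it is also the point at which one must fix the precise meaning of ``constant''.
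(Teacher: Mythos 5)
The paper does not actually prove this statement: it is imported verbatim from Hassett \cite[Section 5]{Has03} (with the wall set adjusted as in the Alexeev--Guy correction \cite[Remark 2.3]{AG08} that the text mentions right afterwards), so there is no internal proof to measure your argument against. Judged on its own terms, your reconstruction follows the route that Hassett's Section 4--5 suggests, and the substance is correct: $\A$-stability of a genus-zero pointed curve depends on $\A$ only through the signs of $\A(S)-1$; within a coarse chamber only two-point collisions can change under a coordinatewise reduction, so $\rho_{\A,\A'}$ is a proper birational bijection onto a normal (indeed smooth) target and Zariski's main theorem makes it an isomorphism; across a wall $\A(S_0)=1$ the reduction contracts $D_{S_0,S_0^c}$ with $(|S_0|-2)$-dimensional fibres, so the Picard number drops and the two adjacent chambers cannot be merged. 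Your observation that the walls with $|S_0|\in\{n-1,n\}$ are vacuous is exactly the computation the paper itself carries out in the proof of Proposition \ref{prop: n=5 Hassett spaces}.

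Two small points deserve tightening, though neither is fatal. First, the coordinatewise minimum of two weights in a chamber need not lie in that chamber: for $n=5$ take $\A=(0.55,0.55,0.05,1,1)$ and $\A'=(0.05,0.55,0.55,1,1)$, both of type (A), whose minimum satisfies $a_1+a_2+a_3=0.65<1$. Your parenthetical is rescued only by the word ``locally'', i.e.\ by subdividing the segment joining $\A$ to $\A'$ inside the convex chamber finely enough that the minima of consecutive subdivision points stay in the chamber (which works because the strict inequalities defining the chamber are bounded away from equality on the compact segment); this should be said explicitly. Second, ``coarsest decomposition'' needs a convention for weights lying \emph{on} a wall: since leaf stability is the strict inequality $\A(T)>1$ while collisions require $\A(S)\le 1$, a wall point behaves like the adjacent chamber on the lower side, and your argument only ever compares interior points of chambers. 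This imprecision is already present in Hassett and does not affect the content.
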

\begin{rem}
\label{rem:chamber_decomposition}
To start with, we shall briefly discuss the case $n=4$.
A weight $\A=(a_1,a_2,a_3,a_4) \in \mathcal{D}_{0,4}$ must satisfy
\begin{enumerate}
    \item $0< a_i\leq 1$, 
    \item $a_1+a_2+a_3+a_4 > 2$.
\end{enumerate}
Any such quadruple $(a_1,a_2,a_3,a_4)$ necessarily satisfies $a_i+a_j+a_k > 1$ for all triplets $\{i,j,k\}\subset \mathbb{N}_4$.
Hence, all combinations $\A$ lie in the same coarse chamber, which shows that $\overline{\M}_{0,\A}$ does not depend on $\A$ as a coarse moduli space.
Therefore, there is no distinction between the weights in classifying $\overline{\M}_{0,\A}$.
\end{rem}
We now use Theorem \ref{thm:coarse chamber} to analyse the moduli spaces of weighted pointed rational curves for the first nontrivial case $n=5$.
First, we count the number of $S\in\mathbb{S}$ which define a non-empty chamber.
The following is a purely combinatorial problem.
\begin{lem}
\label{lem:combinatorial computation of I}
Let $0<a_1\le a_2\le a_3\le a_4\le a_5\le1$ and suppose
$\sum_{i=1}^5 a_i > 2$.
Then there are at most four $3$-element subsets $S\in\mathbb{S}$ for which
$\sum_{i\in S}a_i<1$.
\end{lem}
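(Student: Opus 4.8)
The plan is to exploit the ordering $a_1 \le a_2 \le \cdots \le a_5$ to control which triples can have small sum, and then to argue by a counting/parity trick that no five triples can simultaneously satisfy the inequality. First I would observe that if $S = \{i,j,k\}$ with $i<j<k$ satisfies $\A(S) = a_i + a_j + a_k < 1$, then in particular the three \emph{smallest}-indexed elements are forced to be small; concretely, any such $S$ must satisfy $a_i + a_j + a_k < 1$, which combined with $\sum_{i=1}^5 a_i > 2$ gives $a_\ell + a_m > 1$ for the complementary pair $\{\ell,m\} = S^c$. So the triples $S$ with $\A(S) < 1$ are exactly the complements of the pairs $\{\ell,m\}$ with $a_\ell + a_m > 1$; it suffices to show there are at most four pairs $\{\ell,m\}$ with $a_\ell + a_m > 1$, equivalently at least six pairs (out of $\binom{5}{2} = 10$) with $a_\ell + a_m \le 1$. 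Wait — that reformulation is backwards in sign, so let me instead bound directly: I want at most four triples with sum $< 1$, i.e. at most four pairs with sum $> 1$.

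The key step is then: \emph{at most four of the ten pairs $\{i,j\}$ satisfy $a_i + a_j > 1$.} Here I would use the monotonicity decisively. If the pair $\{4,5\}$ has $a_4 + a_5 \le 1$, then every pair has sum $\le 1$ (since $\{4,5\}$ is the maximal pair), so there are $0$ "large" pairs and hence $0$ small triples — far fewer than four, done. So assume $a_4 + a_5 > 1$. The pairs that could be large are, in decreasing order of plausibility, those involving indices $4$ and $5$: $\{4,5\}, \{3,5\}, \{2,5\}, \{1,5\}, \{3,4\}, \{2,4\}, \{1,4\}$, and possibly $\{2,3\},\{1,3\},\{1,2\}$. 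I claim the pairs $\{1,4\}$ and $\{2,3\}$ cannot \emph{both} be large, and more generally that at most four pairs are large. The cleanest route: suppose for contradiction that five pairs are large. Summing the inequality $a_i + a_j > 1$ over these five pairs gives $\sum (\text{multiplicities}) \cdot a_i > 5$. Since each $a_i \le 1$, the total multiplicity must exceed $5$, so the five pairs involve at least $6$ index-slots; but also the five pairs, being distinct $2$-subsets of a $5$-set, form a graph on $5$ vertices with $5$ edges — I would then combine the degree-sequence constraint with $\sum a_i \le 5$ and $\sum_{i=1}^5 a_i > 2$ (using the lower bound on the small $a_i$ forced by the large-pair inequalities) to reach a contradiction. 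More precisely, if five pairs are large, the $a_i$ appearing with positive degree are bounded below (each is $> 1 - a_5 \ge 0$, and a vertex of degree $\ge 2$ forces the relevant $a_i$ fairly large), and one checks the arithmetic is impossible given $a_i \le 1$ for all $i$.

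The main obstacle I anticipate is making the final counting argument fully rigorous rather than hand-wavy: the naive "sum over five pairs" bound $\sum m_i a_i > 5$ with $a_i \le 1$ only forces $\sum m_i \ge 6$, which is automatic for any five distinct pairs (total degree $= 10$), so it gives nothing by itself. The real content must come from pairing this with the \emph{constraint} $\sum_{i=1}^5 a_i > 2$ in the right direction — one wants to show that having five large pairs forces $\sum a_i$ to be large in a way that, combined with some $a_i$ being necessarily small (because their partners in a large pair are $\le 1$), overdetermines the system. I would handle this by a short case analysis on the "light" vertex: among $5$ edges on $5$ vertices, some vertex has degree $\le 1$; if vertex $v$ has degree $0$ then the $5$ edges lie on the other $4$ vertices, impossible since $\binom{4}{2}=6 \ge 5$ is allowed — so then those four vertices pairwise (almost) all sum $>1$, forcing each of $a_1,\dots$ among them $> 1/2$, and then $\sum a_i > 4\cdot(1/2) + a_v = 2 + a_v$, which is consistent, so this needs the sharper observation that the \emph{one} missing edge among those four must be $\{1,2\}$ (the two smallest), forcing $a_3, a_4$ paired with both $a_1,a_2$ to be $>1-a_1$ and $>1-a_2$, eventually pinning $a_1 = a_2 = \cdots$ near $1/2$ and contradicting strictness somewhere. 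The degree-$1$ case is similar. I expect the write-up to be a compact but slightly fiddly finite case check, and the art is in choosing the case division (on which edge is missing, or on the low-degree vertex) so that each case collapses in one or two lines.
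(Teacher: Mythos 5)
Your reduction from triples to complementary pairs introduces a threshold error that makes your pivotal claim false. From $\A(S)<1$ and $\sum_i a_i>2$ you correctly deduce $\A(S^c)>1$, but this implication does not reverse, and the statement you then set out to prove --- \emph{at most four of the ten pairs $\{i,j\}$ satisfy $a_i+a_j>1$} --- is simply not true under the hypotheses: take $a_1=\cdots=a_5=3/5$, so that $\sum_i a_i=3>2$ and \emph{all ten} pairs have sum $6/5>1$ (while, consistently with the lemma, no triple has sum $<1$). This is why your degree-counting attempts keep coming out ``consistent'' rather than contradictory: you are trying to prove a false intermediate statement, and no case division on light vertices or missing edges will rescue it. The condition actually equivalent to $\A(S)<1$ is $\A(S^c)>\sum_i a_i-1$, which is strictly stronger than $\A(S^c)>1$ precisely because $\sum_i a_i>2$.

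Once the threshold is corrected, the argument closes quickly with no case analysis. If $S_1\neq S_2$ both satisfy $\A(S_j)<1$, then $S_1\cup S_2\neq\mathbb{N}_5$, since otherwise $\sum_i a_i\le\A(S_1)+\A(S_2)<2$; hence $|S_1\cup S_2|=4$, so the complementary pairs $S_1^c$ and $S_2^c$ meet in exactly one index. The complements of the small triples therefore form a pairwise-intersecting family of $2$-subsets of $\mathbb{N}_5$, and such a family is either a star (all pairs through one fixed vertex, at most four of them) or a triangle (three pairs); in either case it has at most four members. Note that this uses only $a_i>0$ and $\sum_i a_i>2$, not the ordering or the bound $a_i\le1$. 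The paper's own route is different again: it splits into the cases $a_1+a_2>1$ and $a_3+a_4+a_5>1$ (one of which must occur) and exploits the ordering of the $a_i$. Your graph-theoretic framing is a reasonable alternative, but only after the threshold $1$ is replaced by $\sum_i a_i-1$ and the disjointness observation above is made.
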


\begin{proof}
Set $\sigma\coloneqq\sum_{i=1}^5 a_i$ and define
\[
m\defeq a_1+a_2+a_3,\qquad M\defeq a_1+a_2+a_5.
\]

\textbf{Case 1: $m\ge 1$.}
Since $\{1,2,3\}$ has the smallest $3$-sum among all triples by the monotonicity
$a_1\le \cdots\le a_5$, every $3$-sum is $\ge1$, so there are no triples with sum $<1$.

\textbf{Case 2: $m<1$ and $M\ge 1$.}
Any triple having the form $\{i,j,5\}$ has a sum at least $a_i + a_j + a_5 \ge M\ge1$.
The only possible triples whose sum is less than 1 are those avoiding $5$, of which there are
$\binom{4}{3}=4$; thus there are at most four $3$-element subsets $S$.

\textbf{Case 3: $m<1$ and $M<1$.}
From $\sigma>2$, we obtain
\[
a_4+a_5=\sigma-m>1,\qquad a_3+a_4=\sigma-M>1.
\]
Consequently, any triple that contains $\{4,5\}$ or $\{3,4\}$ has sum $>1$.
In particular, for any $S\in\mathbb{S}$, which coincides with one of 
\[
\{1,4,5\},\ \{2,4,5\},\ \{3,4,5\},\ \{1,3,4\},\ \{2,3,4\},
\]
we have $\sum_{i\in S} a_i >1$.
Now, the condition $m<1$ and $M<1$ imply that $\sum_{i\in S}a_i <1$ for 
\[S=\{1,2,3\},\ \{1,2,5\},\ \{1,2,4\}.
\]
The only remaining triples are $\{1,3,5\}$ and $\{2,3,5\}$, and we deduce that $a_2 + a_3 + a_5 \ge a_1 + a_3 + a_5 > a_3+a_5\ge a_3+a_4>1$. Hence, in this case, there are exactly three triplets whose sum is less than $1$.

Combining the three cases, the number of $S\in\mathbb{S}$ with $\sum_{i\in S} a_i <1$ is always at most four.
\end{proof}

Before discussing the chamber structure of $\mathcal{D}_{0,5}$ in detail, let us recall the basic surface-theoretic picture underlying our arguments. The moduli space $\overline{\M}_{0,5}$ is isomorphic to the blowup of $\mathbb{P}^2$ at four general points, hence it is the del Pezzo surface of degree $5$. Moreover, its ten boundary divisors $D_{I,I^c}$ are exactly the ten $(-1)$-curves on this surface. Therefore, whenever one crosses a wall in the chamber decomposition so that $\A(I^c)\le 1$, the corresponding reduction morphism contracts the $(-1)$-curve $D_{I,I^c}$. In this way, the birational geometry of the spaces $\overline{\M}_{0,\A}$ can be described explicitly in terms of successive blowdowns of boundary divisors as follows.
\begin{defn}
\label{defn:classification for n=5}
    Let $\A\in \mathcal{D}_{0,5}$ be a weight and $D(\A)=\{I_1,\cdots,I_{d(\A)}\}$.
    We define the types (A) - (F) of $\A$ as follows:
        \begin{enumerate}
        \item[($A$)] $d(\A) = 0$.
        \item[($B$)] $d(\A) = 1$.
        \item[($C$)] $d(\A) = 2$.
        \item[($D$)] $d(\A) = 3$ and $|I_1\cup I_2\cup I_3| = 4$.
        \item[($E$)] $d(\A) = 3$ and $|I_1\cup I_2\cup I_3| = 3$.
        \item[($F$)] $d(\A) = 4$.
    \end{enumerate}
\end{defn}
In what follows, we show that these six types are the representatives of chambers by the $S_5$-action and enumerate them:

\begin{prop}
\label{prop: n=5 Hassett spaces}
The domain $\mathcal D_{0,5}$ has exactly $76$ chambers, of which
\begin{enumerate}
    \item $1$ is of type (A),
    \item $10$ are of type (B),
    \item $30$ are of type (C),
    \item $20$ are of type (D),
    \item $10$ are of type (E), and
    \item $5$  are of type (F).
\end{enumerate}
These correspond to the coarse chambers modulo the $S_5$-action.
\end{prop}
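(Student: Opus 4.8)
The plan is to count the chambers by stratifying $\mathcal{D}_{0,5}$ according to the combinatorial type of the set $D(\A) = \{I \in \mathbb{I} \mid \A(I^c) \leq 1\}$, and then to show that each combinatorial type (A)--(F) is actually realized by a non-empty chamber, with the stated multiplicities. First I would observe that, by Lemma \ref{lem:combinatorial computation of I} applied to $I^c \in \mathbb{S}$, one always has $d(\A) = |D(\A)| \leq 4$, so the only possibilities for $d(\A)$ are $0,1,2,3,4$; this already tells us that types (A)--(F) exhaust all cases, once we separate $d(\A) = 3$ into the two subcases $|I_1 \cup I_2 \cup I_3| \in \{3, 4\}$ (note $|I_1 \cup I_2 \cup I_3| = 5$ is impossible since three $2$-element subsets of a $5$-element set covering all of it would force pairwise disjointness, impossible for three sets of size $2$ in a set of size $5$, and $|I_1\cup I_2\cup I_3|=2$ would force $I_1=I_2=I_3$).

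Next I would carry out the enumeration type by type. For type (A), $D(\A) = \varnothing$, meaning $\A(S) > 1$ for all $S \in \mathbb{S}$; the symmetric weight $\A = 5 \cdot (1/2 + \epsilon)$ realizes this, and since any two such weights lie in the chamber cut out by the inequalities $\A(S) > 1$ for all $|S| = 3$, there is exactly one chamber. For type (F), $d(\A) = 4$: by the proof of Lemma \ref{lem:combinatorial computation of I}, the four subsets $S$ with $\A(S) < 1$ are precisely those omitting one of three distinguished indices, so such a configuration is determined by the choice of an unordered triple $\{i,j,k\}$ (equivalently, by which single pair $I$ has $\A(I^c) > 1$); this gives $\binom{5}{3} = \binom{5}{2} = 5$ chambers after checking each is non-empty via an explicit weight. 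For type (E), the three pairs $I_1, I_2, I_3$ with $\A(I_\ell^c) \leq 1$ satisfy $|I_1 \cup I_2 \cup I_3| = 3$, i.e. they are the three pairs inside a fixed $3$-element subset $T \subset \mathbb{N}_5$; there are $\binom{5}{3} = 10$ choices of $T$. For type (D), $|I_1 \cup I_2 \cup I_3| = 4$: the three pairs form a graph on $4$ vertices with $3$ edges and no isolated vertex, hence either a path or a star (triangle is excluded as it has only $3$ vertices); I would count the realizable such configurations among the $\binom{5}{4} = 5$ choices of the $4$-element support together with the graph shape, arriving at $20$. For type (C), $d(\A) = 2$: two pairs $I_1, I_2$, which either share a vertex or are disjoint; counting ordered-by-nothing pairs of $2$-subsets of $\mathbb{N}_5$ gives $\binom{\binom{5}{2}}{2} = \binom{10}{2} = 45$ a priori, but not all are realizable, and the claim is that exactly $30$ occur. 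For type (B), $d(\A) = 1$: a single pair $I$, giving the $\binom{5}{2} = 10$ chambers. Summing: $1 + 10 + 30 + 20 + 10 + 5 = 76$.

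The genuinely delicate steps are the non-emptiness and exact-count assertions for types (C) and (D), where the naive combinatorial count of subset-configurations overshoots and one must determine precisely which configurations $D(\A)$ are realized by some $\A \in \mathcal{D}_{0,5}$. Here I would use the ordering $a_1 \leq \dots \leq a_5$ and the dichotomy from Lemma \ref{lem:combinatorial computation of I}: given a target set $D(\A)$, translate $\A(I^c) \leq 1$ and $\A(S) > 1$ into a system of linear inequalities in the $a_i$ and test feasibility (equivalently, check the implied partial order on the pair-sums is consistent with $0 < a_i \leq 1$ and $\sum a_i > 2$). Monotonicity of $\A(S)$ in the $a_i$ forces strong constraints — e.g. if $\A(S) \leq 1$ and $S' \subset S$ in the sense of being "smaller" coordinatewise then $\A(S') \leq 1$ — and these constraints are exactly what prunes $45 \to 30$ for type (C) and the larger graph count down to $20$ for type (D). I expect this feasibility bookkeeping, together with verifying that distinct realizable $D(\A)$ give distinct chambers (immediate, since $D(\A)$ is constant on a chamber by Theorem \ref{thm:coarse chamber} and the walls are exactly the hyperplanes $\A(S) = 1$) and that each realizable type is a single connected chamber rather than several, to be the main obstacle; the rest is a finite, if slightly tedious, case check. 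Finally, one records that $\overline{\M}_{0,\A}$ is constant on each chamber by Theorem \ref{thm:coarse chamber}, so the $76$ chambers are genuinely the objects counted in Problem \ref{prob:Hassett counting of chamber decomposition}.
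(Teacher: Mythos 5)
Your overall strategy --- stratify by the combinatorial type of $D(\A)$, bound $d(\A)$ via Lemma \ref{lem:combinatorial computation of I}, then count configurations --- is the same as the paper's, and your final numbers are correct, but the single idea that actually produces the counts $30$ and $20$ is absent from your argument. You defer it to ``feasibility bookkeeping'' and ``monotonicity,'' whereas it is a one-line observation: if $I_1,I_2\in D(\A)$ with $I_1\cap I_2=\emptyset$, then $I_1^c\cup I_2^c=\mathbb{N}_5$, so $2<\sum_i a_i\le \A(I_1^c)+\A(I_2^c)\le 2$, a contradiction. Hence any two members of $D(\A)$ intersect, so $D(\A)$ is a family of pairwise intersecting edges of $K_5$, i.e.\ a set of edges through a common vertex or a triangle. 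This is exactly what prunes $45\to 30$ in type (C) (the $15$ pairs of disjoint edges are excluded; the $5\cdot\binom{4}{2}=30$ incident pairs remain) and what eliminates the path $P_4$ in type (D), leaving only the $5\cdot\binom{4}{3}=20$ stars $K_{1,3}$. Without it, your ``arriving at $20$'' is an assertion rather than a count: the $60$ labelled paths on $4$ of the $5$ vertices would otherwise also be candidates, and nothing in your text rules them out. The same observation settles type (F) ($4$ pairwise intersecting edges must form a star $K_{1,4}$, giving $5$ choices of centre) and shows $d(\A)\le 4$ directly.

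Two further points. Your parenthetical excluding $|I_1\cup I_2\cup I_3|=5$ is wrong as stated: three $2$-element subsets can cover $\mathbb{N}_5$ without being pairwise disjoint (e.g.\ $\{1,2\},\{1,3\},\{4,5\}$), so covering does not ``force pairwise disjointness''; the correct exclusion again comes from the pairwise-intersection constraint above. Finally, you rightly flag non-emptiness of each configuration and connectedness of each locus as remaining work but do not carry it out. Connectedness is immediate once one notes that the locus of $\A\in\mathcal{D}_{0,5}$ realizing a prescribed sign pattern of the quantities $\A(S)-1$, $S\in\mathbb{S}$, is an intersection of convex sets and hence convex; non-emptiness reduces, by the $\SS_5$-symmetry, to exhibiting one explicit weight per type (e.g.\ $a_1=1$, $a_2=\cdots=a_5=3/10$ for type (F)). With these additions your plan closes up and coincides in substance with the paper's proof.
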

\begin{proof}
By the above discussion, the coarse chamber corresponds to the connected components of 
\begin{align}
\label{eq:coarse chamber decomposition}
   \mathcal{D}_{0,5}\setminus \bigcup_{S\subset \mathbb{N}_5,\ 3 \leq |S| \leq 5}\left\{\sum_{j\in S} a_j =1\right\}.
\end{align}
A priori, this leaves us the cases $|S| = 3,4,5$. The cases $|S|=4,5$, however are not possible. 
    Indeed, if $|S|=5$, then this contradicts the condition $\sum_{i=1}^5 a_i > 2$ as $S$ coincides with $\mathbb{N}_5$. 
    Further, if $|S|=4$, then there exists an index $k$ so that $\sum_{j\in \mathbb{N}_5\setminus \{k\}} a_j= 1$ and since also  $a_k\le 1$, this again contradicts $\sum_{i\in \mathbb{N}_5} a_i > 2$.
    This leads us to the condition $\sum_{i\in S} a_i = 1$ for $S\in\mathbb{S}$.
 There are $\binom{5}{3}=10$ such subsets $S \subset \mathbb{N}_5$, that is $|\mathbb{S}|$ = 10.
Lemma \ref{lem:combinatorial computation of I} implies that for a given weight $\A\in\mathcal{D}_{0,5}$ one always has $d(\A)\le 4$.

The classification into types (A)-(F) in Definition \ref{defn:classification for n=5} corresponds to the number and configuration of subsets $I \in \mathbb{I}$ such that $\A(I^c) \le 1$.
On each connected component, the sign $(\sum_{j \in S} a_j) -1$ is constant and conversely, the chambers are distinguished by the collection of these signs.
Item (1) is exactly the case $(\sum_{j\in S} a_j) -1 >0$ for all $S$. This shows the claim combined with the description of the coarse chamber decomposition (\ref{eq:coarse chamber decomposition}).
Next, item (2) is a consequence of $|\mathbb{S}|=10$.
We thus move to (3) and type (C).
The problem is counting the number of $I_i \in\mathbb{I}$ with $\A(I_i^c)\le 1$ for $i=1,2$.
If $|I_1^c\cup I_2^c| = \mathbb{N}_5$, then this implies $\sum_{i=1}^5 a_i \le \A(I_1^c) + \A(I_2^c) \le 2$.
Hence, we must have $|I_1^c \cup I_2^c| = 4$, which implies that the number of possible cases equals $\binom{5}{3} \times \binom{3}{2}\times \binom{2}{1}/2 = 30$.
For the case of $d(\A) = 3$, let $D(\A) = \{I_1,I_2,I_3\}$.
If $I_1^c\cap I_2^c\cap I_3^c = \emptyset$, then we can find indices $i,j$ with $I_i^c\cup I_j^c = \mathbb{N}_5$.
This contradicts the definition of weight as above.
Hence, we have $|I_1^c\cap I_2^c \cap I_3^c| =1$ or $2$ since $|I_1^c\cap I_2^c \cap I_3^c| \le 2$.
The number claimed now follows from a direct computation.
Item (6) corresponds to the choice of $I_i\in\mathbb{I}$ with $D(\A) = \{I_1,\cdots, I_4\}$ and $|I_1^c\cup I_2^c\cup I_3^c\cup I_4^c| = 4$.
The number of possible choices is $\binom{5}{4} = 5$.

It remains to show that the types in in Definition \ref{defn:classification for n=5} characterize the coarse chambers modulo the $S_5$-action. Clearly, different types are not in the same $S_5$-orbit.
To see that all weights $\A$ of the same type are $S_5$-equivalent, it is helpful to invoke the complete graph $K_5$ on the five vertices $\{1, \ldots , 5 \}$. Its 10 edges are in $1:1$ correspondence 
with the set $\mathbb I$ defined in (\ref{equ:defI}). Now, given a weight $\A$, the set $D(\A)$ defines a subgraph of $K_5$ with $d(\A)$ edges. These subgraphs are depicted in 
Figure \ref{fig:k5-subgraphs}
under the assumption that $a_1 \leq a_2 \leq \ldots \leq a_5$. 
It is obvious that two subgraphs 
defined by weights $\A$ and $\A'$ are isomorphic as subgraphs of $K_5$, if and only if the weights are of the same type. 
But now the claim follows since $\Aut(K_5)\cong S_5$.

\begin{figure}[t]
  \centering
  \setlength{\tabcolsep}{2pt}\renewcommand{\arraystretch}{0}

  \begin{subfigure}[t]{.31\columnwidth}
    \centering
    \includegraphics[width=\linewidth,trim=2pt 2pt 2pt 2pt,clip]{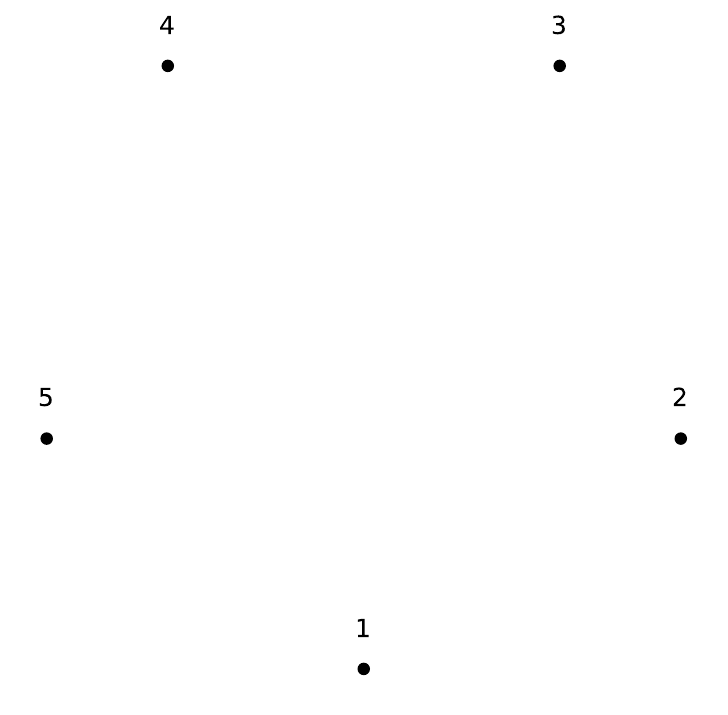}
    \caption{Type (A)}
  \end{subfigure}\hfill
  \begin{subfigure}[t]{.31\columnwidth}
    \centering
    \includegraphics[width=\linewidth,trim=2pt 2pt 2pt 2pt,clip]{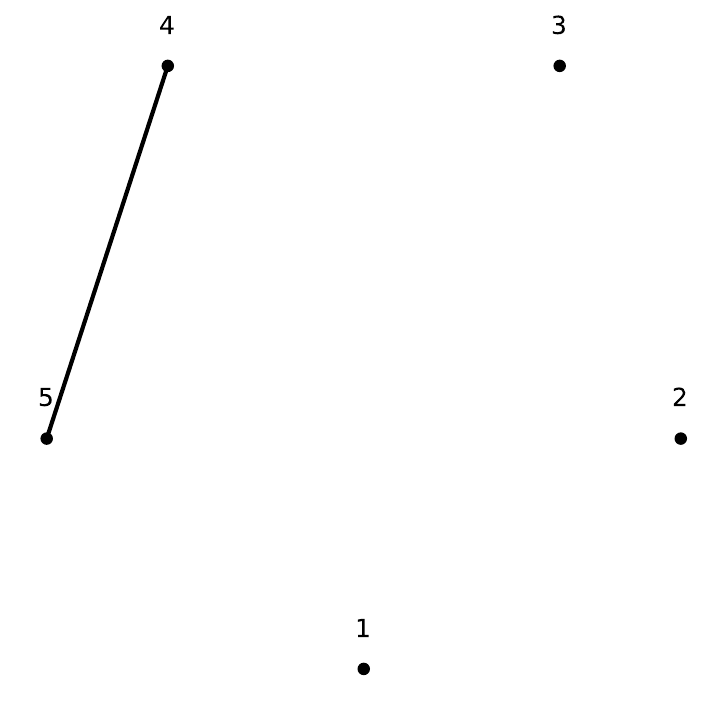}
    \caption{Type (B)}
  \end{subfigure}\hfill
  \begin{subfigure}[t]{.31\columnwidth}
    \centering
    \includegraphics[width=\linewidth,trim=2pt 2pt 2pt 2pt,clip]{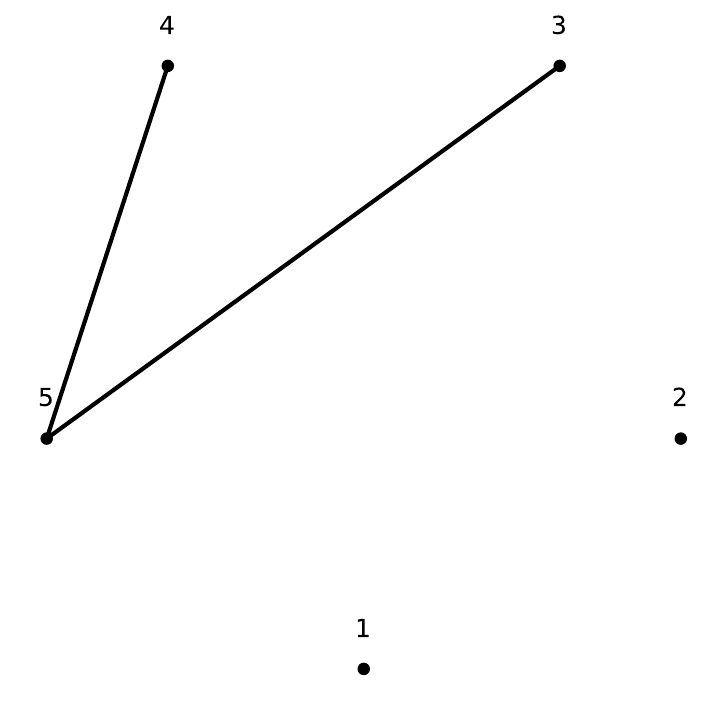}
    \caption{Type (C)}
  \end{subfigure}

  \vspace{0.4em}

  \begin{subfigure}[t]{.31\columnwidth}
    \centering
    \includegraphics[width=\linewidth,trim=2pt 2pt 2pt 2pt,clip]{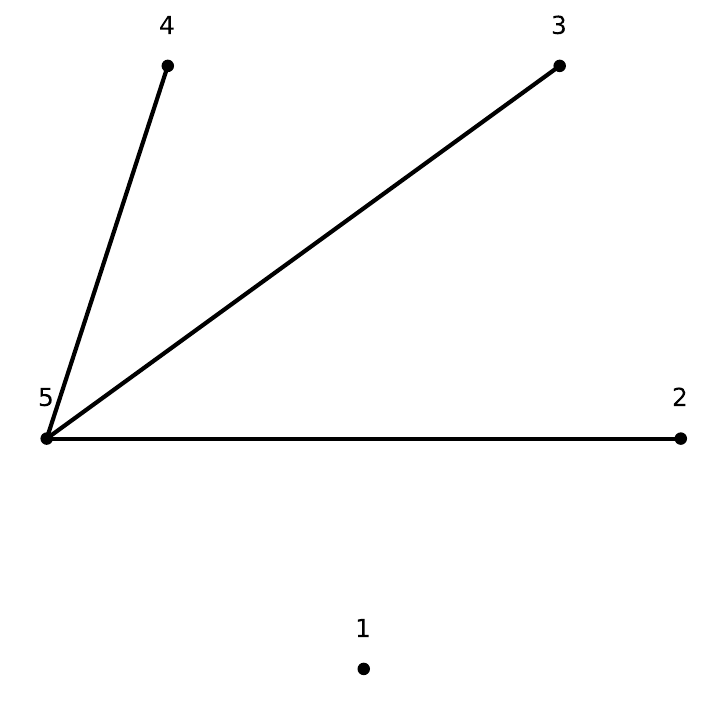}
    \caption{Type (D)}
  \end{subfigure}\hfill
  \begin{subfigure}[t]{.31\columnwidth}
    \centering
    \includegraphics[width=\linewidth,trim=2pt 2pt 2pt 2pt,clip]{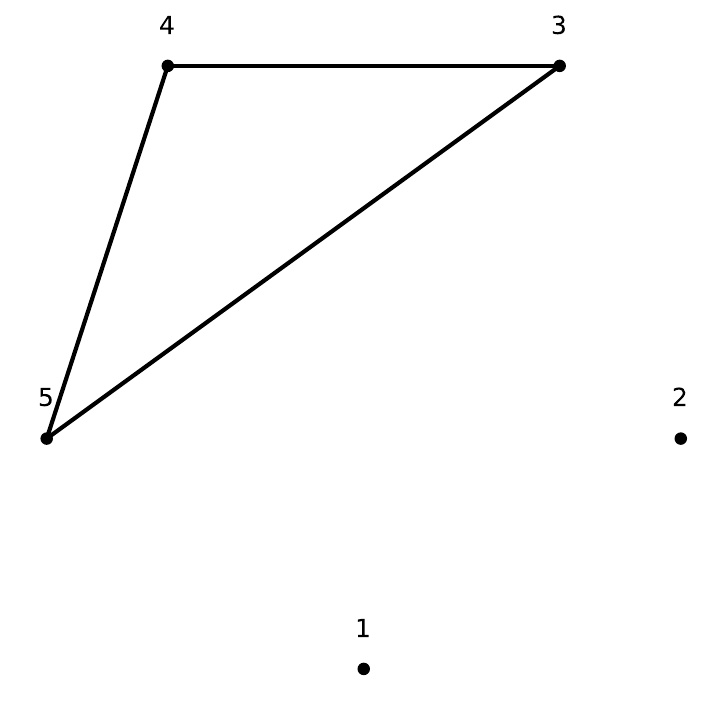}
    \caption{Type (E)}
  \end{subfigure}\hfill
  \begin{subfigure}[t]{.31\columnwidth}
    \centering
    \includegraphics[width=\linewidth,trim=2pt 2pt 2pt 2pt,clip]{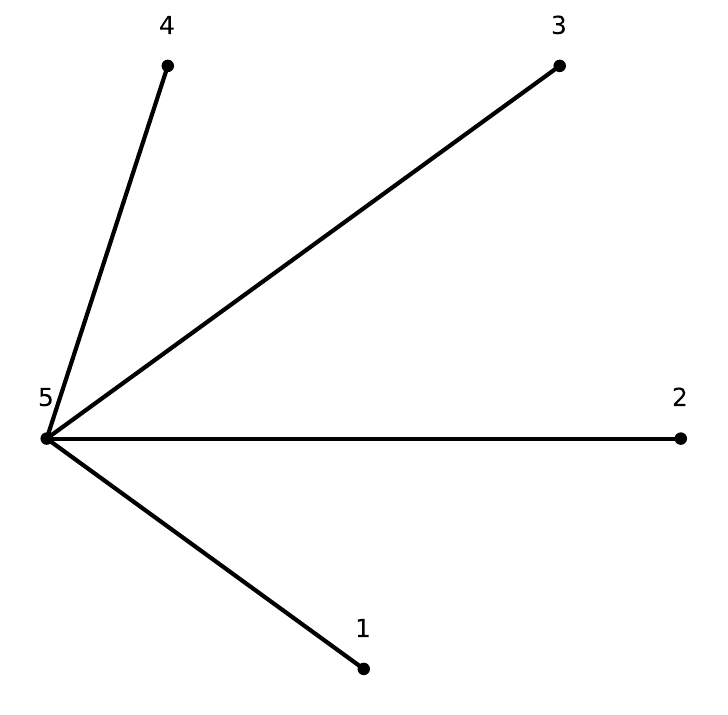}
    \caption{Type (F)}
  \end{subfigure}

  \caption{Subgraphs of $K_5$ representing the six $S_5$-orbits of coarse chambers.}
  \label{fig:k5-subgraphs}
\end{figure}

\end{proof}

Note that there are $10$ divisors $D_{I,I^c}$ in $\overline{\M}_{0,5}$, corresponding to $I\in\mathbb{I}$.
We now turn to the geometry of the moduli spaces $\overline{\M}_{0,\A}$. For a general discussion, see also \cite[Subsection 6.2]{Has03}.
\begin{defn}
    Let $\A,\mathcal{B}\in\mathcal{D}_{0,5}$ be weights.
    We say that $\A \ge \mathcal{B}$ if $\A(I^c) \le 1$ implies $\mathcal{B}(I^c) \le 1$ for any $I\in\mathbb{I}$.
    Furthermore, we say that $\A > \mathcal{B}$ if $\A \ge \mathcal{B}$ and there exists an $I\in\mathbb{I}$ with $\A(I^c)>1$ and $\mathcal{B}(I^c) \le 1$.
\end{defn}

The following proposition gives a geometric interpretation of $\overline{\M}_{0,\A}$. We will summarize the properties of these surfaces in Table \ref{tab:six-types} below.
    \begin{prop}
    \label{prop:contraction}
    Let $\A\in\mathcal{D}_{0,5}$ be a weight.
   \begin{enumerate}
\item The moduli spaces $\overline{\M}_{0,\A}$ are isomorphic to a del Pezzo surfaces of degree $5+d(\A)$.
       \item If $\A$ is of type \textit{(D)}, then $\overline{\M}_{0,\A}$ is the Hirzebruch surface $\mathbb{F}_1$, that is a blowup of $\P^2$ at one point.
       \item If $\A$ is of type \textit{(E)}, then $\overline{\M}_{0,\A}$ is $\P^1\times \P^1$.
   \item Let $\mathcal{B}\in\mathcal{D}_{0,5}$ be a weight such that $\A \ge \mathcal{B}$.
   The reduction map $\rho_{\mathcal{B},\A}:\overline{\M}_{0,\A} \to \overline{\M}_{0,\mathcal{B}}$, defined in \cite[Theorem 4.1]{Has03}, contracts those $(-1)$-curves $D_{I,I^c}$ for which  $\A(I^c)>1$ and $\mathcal{B}(I^c)\le 1$.
   \end{enumerate}

    \end{prop}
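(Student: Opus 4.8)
The plan is to prove the four items by reducing everything to the description of $\overline{\M}_{0,\A}$ via the forgetful/reduction maps of \cite{Has03}, together with the explicit count $\deg = 5 + d(\A)$ that we wish to establish in item (1), and the combinatorics of the ten boundary divisors $D_{I,I^c}$, $I \in \mathbb{I}$. The key input is that $\overline{\M}_{0,5} \cong \overline{\M}_{0,5\cdot 1}$ is the degree-$5$ del Pezzo surface (the blow-up of $\P^2$ at the four coordinate points, equivalently $\Spec$ of the Cremona orbit of $4$ general points), that its ten $(-1)$-curves are exactly the $D_{I,I^c}$, and that for $\A < 5\cdot 1$ the reduction map $\rho_{\A, 5\cdot 1}\colon \overline{\M}_{0,5} \to \overline{\M}_{0,\A}$ is a composition of smooth blow-downs. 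I would build item (4) first, then deduce (1), (2), (3) as consequences.

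First I would establish item (4). By \cite[Theorem 4.1]{Has03} the reduction morphism $\rho_{\mathcal B, \A}$ exists whenever $\A \geq \mathcal B$, and its exceptional locus consists precisely of the loci parametrizing curves that are $\A$-stable but not $\mathcal B$-stable. For $n = 5$ a one-dimensional boundary stratum $D_{I, I^c}$ with $I \in \mathbb{I}$ parametrizes a $\P^1$ carrying the two marked points of $I$ glued to a $\P^1$ carrying the three marked points of $I^c$; such a configuration fails $\mathcal B$-stability exactly when the component with the markings in $I^c$ becomes unstable, i.e. when $\mathcal B(I^c) \leq 1$, while it is $\A$-stable precisely when $\A(I^c) > 1$. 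Hence $\rho_{\mathcal B, \A}$ contracts exactly those $D_{I, I^c}$ with $\A(I^c) > 1$ and $\mathcal B(I^c) \leq 1$, and since $\overline{\M}_{0,5}$ is a smooth surface in which each $D_{I,I^c}$ is a smooth rational curve, a curve contracted by a birational morphism of smooth surfaces must be a $(-1)$-curve. (One also needs that the contracted curves are pairwise disjoint so that the contraction is simultaneous; this follows because two boundary divisors $D_{I,I^c}$ and $D_{J,J^c}$ with $I \ne J$ in $\mathbb{I}$ meet if and only if $I \cap J = \emptyset$ or $I \subset J^c$ — the standard crossing pattern on $\overline{\M}_{0,5}$ — and one checks this cannot happen for two distinct elements of the contracted set using $\A(I^c), \A(J^c) > 1$ and the weight inequality $\sum a_i > 2$, exactly the kind of argument already used in the proof of Proposition \ref{prop: n=5 Hassett spaces}.)

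Next, item (1). Starting from $\overline{\M}_{0, 5\cdot 1} = S_5$ and applying (4) to $\mathcal B = \A$ against the maximal weight $5\cdot 1$: the map $\rho_{\A, 5\cdot 1}$ contracts exactly the $D_{I,I^c}$ with $\A(I^c) \leq 1$, i.e. exactly $d(\A)$ disjoint $(-1)$-curves. Since blowing down a $(-1)$-curve raises the degree of a del Pezzo surface by one and preserves the del Pezzo property — here one must check that the resulting surface is again del Pezzo, which holds because contracting disjoint $(-1)$-curves on a del Pezzo surface yields a del Pezzo surface as recalled in the ``Del Pezzo surfaces'' section — we get $\deg \overline{\M}_{0,\A} = 5 + d(\A)$. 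By Lemma \ref{lem:combinatorial computation of I}, $d(\A) \le 4$, so the degree lies in $\{5,6,7,8,9\}$; a del Pezzo of degree $9$ would be $\P^2$, which does not arise here since $\overline{\M}_{0,\A}$ always admits a nonconstant map to $\overline{\M}_{0,4} \cong \P^1$ (forgetting all but four points), so in fact $5 \le \deg \le 8$. Finally, items (2) and (3): for $\A$ of type (D) we have $d(\A) = 3$ with $|I_1 \cup I_2 \cup I_3| = 4$, so the three contracted $(-1)$-curves, when pulled back to $\P^2$ under $S_5 \to \P^2$, correspond to a configuration that blows down to $\P^2$ blown up at one point, namely $\mathbb{F}_1$; for type (E), $d(\A) = 3$ with $|I_1 \cup I_2 \cup I_3| = 3$, and the combinatorics forces the contracted configuration to be the one whose blow-down is $\P^1 \times \P^1$ (the degree-$8$ del Pezzo that is \emph{not} $\mathbb{F}_1$). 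Concretely one identifies which three of the ten $(-1)$-curves on $S_5$ are being contracted, reads off their classes in $\Pic(S_5) = \langle H, E_1, \dots, E_4\rangle$, and recognizes the blow-down; the two type-$8$ cases are distinguished by whether the three contracted classes can be simultaneously made exceptional (giving $\mathbb{F}_1$, since then one further contraction would reach $\P^2$) or not (giving $\P^1\times\P^1$).

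The main obstacle is item (4): making precise that $\rho_{\mathcal B,\A}$ simultaneously contracts exactly the claimed set of divisors and nothing more, and in particular verifying the disjointness of the contracted curves purely from the weight inequalities, so that the map really is an iterated blow-down of $(-1)$-curves rather than some more complicated birational contraction. Once (4) is pinned down, items (1)--(3) are bookkeeping on $\Pic(S_5)$ and the known classification of del Pezzo surfaces of degree $\ge 5$.
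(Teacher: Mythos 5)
Your overall strategy is the same as the paper's: identify $\overline{\M}_{0,5}$ with the degree-$5$ del Pezzo surface whose ten $(-1)$-curves are exactly the $D_{I,I^c}$, show that the reduction morphism contracts precisely the $D_{I,I^c}$ with $\A(I^c)>1$ and $\mathcal{B}(I^c)\le 1$, observe that these are pairwise disjoint (because $I\cap J=\emptyset$ would force $\A(I^c)+\A(J^c)>\sum_i a_i>2$), and count. Your dichotomy for the two degree-$8$ cases --- whether or not the three contracted classes extend to four pairwise disjoint $(-1)$-classes, i.e.\ whether the blow-down admits a further morphism to $\P^2$ --- is essentially the argument the paper uses for type (D); for type (E) the paper instead cites \cite[Remark 8.4]{Has03}, but your lattice check does work.

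There is, however, one genuinely false step in your treatment of item (1): the claim that degree $9$ ``does not arise'' because $\overline{\M}_{0,\A}$ always maps nonconstantly to $\overline{\M}_{0,4}\cong\P^1$. Lemma \ref{lem:combinatorial computation of I} allows $d(\A)=4$, and by Proposition \ref{prop: n=5 Hassett spaces} there are five such chambers (type (F)); for these, e.g.\ when $D(\A)$ consists of the four pairs containing the index $5$, the four corresponding $(-1)$-curves are pairwise disjoint and their contraction yields a degree-$9$ del Pezzo, so $\overline{\M}_{0,\A}\cong\P^2$ --- exactly as recorded in Proposition \ref{mainprop:chamber} and Remark \ref{rem:VGIT}. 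Your supporting argument fails because the forgetful map to a four-pointed Hassett space need not exist: for a type (F) weight such as $\A=(3/10,3/10,3/10,3/10,1)$, deleting any single entry leaves total weight at most $2$, so there is no admissible four-point weight to forget to. Simply delete this exclusion; your derivation of $\deg=5+d(\A)$ from contracting $d(\A)$ disjoint $(-1)$-curves is correct as it stands and then agrees with the paper.
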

    \begin{proof}
     The moduli space $\overline{\M}_{0,5}$, which is the case of type (A), is the blowup of $\P^2$ in 4 general points, that is, the del Pezzo surface of degree 5 by \cite[Remark 3.7]{KM13}; 
     see also \cite[Subsection 6.1]{Has03} or \cite{Kap93a}.  
On $\overline{\M}_{0,5}$, we have 10 curves $D_{I,I^c}$, the vital divisors in the sense of \cite{KM13}.
It follows from the proof of \cite[Lemma 4.3]{KM13} that the self-intersection number of these vital curves is $-1$. Indeed, these are the 10 negative curves on the del Pezzo surface of degree $5$; see also \cite[Proposition 6.5]{Kon07}.
Moving into a chamber with $\sum_{i \in I^c} a_i < 1$ means contracting a $(-1)$-curve $D_{I,I^c}$.  

For a systematic discussion, we divide $\mathbb{S}$ into the following two subsets:
\begin{align*}
    \mathbb{S}_1\defeq\{S\in\mathbb{S}\mid 5\not\in S\},\ \mathbb{S}_2\defeq\{S\in\mathbb{S}\mid 5\in S\}.
\end{align*}
Note that $|\mathbb{S}_1| = 4$ and $|\mathbb{S}_2| = 6$.
Since $D_{I,I^c}$ and $D_{J,J^c}$ intersect for $I,J\in\mathbb{I}$ if and only if $I^c \supset J$, one can check that $D_{I,I^c}$ and $D_{J,J^c}$ do not intersect for $I^c, J^c\in\mathbb{S}_1$, and for any $I^c\in\mathbb{S}_1$, there are three elements in $\mathbb{S}_2$ whose corresponding vital curves intersect with $D_{I,I^c}$.
This recovers the well known configuration of the 10 negative curves on a del Pezzo surface of degree 5, see \cite[Ch. IV. \S 26.9]{Man86}.  
Each time one increases $d(\mathcal A)$ by 1, the number of $(-1)$-curves decreases. In the first two steps, this leads to the unique (as abstract surfaces) del Pezzo surfaces of degree 6 and 7, respectively, but there are several ways of blowing down curves  $D_{I,I^c}$. For example, in the first step, there is a choice of 10 curves corresponding to the 10 chambers of type (B). 
Somewhat more care is needed for the third step as there are two different del Pezzo surfaces of degree 8.
We shall first assume that $\A$ is of type (D).
This means that the reduction map $\overline{\M}_{0,5} \to \overline{\M}_{0,\A}$ 
contracts three curves which are indexed by $I_i$ for $i=1,2,3$, with $|I_1\cup I_2 \cup I_3| = 4$ by Definition \ref{defn:classification for n=5} (D).
Hence, we can take $I_4\in\mathbb{I}\setminus \{I_1,I_2,I_3\}$ such that $|I_1\cap I_2 \cap I_3 \cap I_4| = 1$.
Thus there exists another weight $\mathcal{B}\in\mathcal{D}_{0,5}$ with $\A > \mathcal{B}$, $\mathcal{A}(I_4^c)> 1$
and $\mathcal{B}(I_4^c)\le 1$, which induces a contraction morphism $\overline{\M}_{0,\A} \to \overline{\M}_{0,\mathcal{B}}$.
This gives a contraction of a del Pezzo surface of degree 8 to a del Pezzo surface of degree 9 (which must then necessarily be $\P^2$). But this implies that $\overline{\M}_{0,\A}$ is isomorphic to $\mathbb{F}_1$ as there is no 
morphism from $\P^1 \times \P^1$ to $\P^2$.
Finally, Hassett \cite[Remark 8.4]{Has03} shows that type (E) is isomorphic to $\P^1\times \P^1$.
This concludes the proof.

In  Table \ref{tab:six-types} below, we summarize the properties of the six cases which occur, up to the $S_5$-action. 
In each case, the listed weight vector is only a representative, and the contracted boundary curves are exactly those divisors $D_{I,I^c}$ for which $\A(I^c)\le 1$.
Concerning the connection to the del Pezzo surface of degree 6, we also refer the reader to \cite[Remark 0.12]{MM14} and \cite[Remark 1.6]{MM17}.
\end{proof}
\begin{table}[htbp]
\centering
\small
\begin{tabular}{c|c|l|p{7.8cm}}
Type & $d(\A)$ & Representative weight $\A=(a_1,\dots,a_5)$ & Contracted boundary curves \\
\hline
(A) & $0$ & $(1/2,1/2,1/2,1/2,1/2)$ & none \\[2pt]

(B) & $1$ & $(1/4,1/4,1/4,3/4,3/4)$
& $D_{\{4,5\},\{1,2,3\}}$ \\[2pt]

(C) & $2$ & $(1/5,1/5,1/2,1/2,7/10)$
& $D_{\{4,5\},\{1,2,3\}},\ D_{\{3,5\},\{1,2,4\}}$ \\[2pt]

(D) & $3$ & $(3/20,3/10,3/10,1/2,17/20)$
& $D_{\{4,5\},\{1,2,3\}},\ D_{\{3,5\},\{1,2,4\}},$
\\
& & &
$D_{\{2,5\},\{1,3,4\}}$ \\[2pt]

(E) & $3$ & $(3/25,11/50,11/20,11/20,3/5)$
& $D_{\{4,5\},\{1,2,3\}},\ D_{\{3,5\},\{1,2,4\}},$
\\
& & &
$D_{\{3,4\},\{1,2,5\}}$ \\[2pt]

(F) & $4$ & $(3/10,3/10,3/10,3/10,17/20)$
& $D_{\{4,5\},\{1,2,3\}},\ D_{\{3,5\},\{1,2,4\}},$
\\
& & &
$D_{\{2,5\},\{1,3,4\}},\ D_{\{1,5\},\{2,3,4\}}$
\end{tabular}
\caption{The six coarse chamber types in $D_{0,5}$ up to the $S_5$-action. Here $d(\A)=|D(\A)|$, and $\deg(\overline{\M}_{0,\A}) =5+d(\A)$ by Proposition~2.7.}
\label{tab:six-types}
\end{table}

The theory developed by Deligne and Mostow \cite{DM86} addresses when moduli spaces of weighted configurations 
of (partially) ordered points on $\mathbb{P}^1$ admit a complex hyperbolic structure. This means that they can be realized as a quotient of a complex ball by a discrete group of isometries, via the monodromy of Appell-Lauricella hypergeometric functions. 
In this situation the Baily-Borel compactifications of these ball quotients are isomorphic to GIT quotients of $(\mathbb P^1)^n$ where the weights determine the ample line bundle $\L$ on $(\mathbb P^1)^n$ and its 
$\SL_2(\C)$-linearization.  
For the Baily-Borel compactifications, see \cite{BB66} for arithmetic subgroups and \cite{Mok12} for non-arithmetic subgroups; see also \cite[Section 8]{YZ24}.

Kirwan, Lee, and Weintraub showed that all del Pezzo surfaces of degree $\ge 5$ arise as Deligne-Mostow ball quotients \cite[Theorem 4.1 (i)-(vi)]{KLW87}.
Combined with Proposition \ref{prop:contraction}, we can observe that $\overline{\M}_{0,\A}$ are isomorphic to ball quotients as abstract varieties.
We furthermore show that these isomorphisms preserve the moduli problems.
To make this precise, we formalize what it means for such a modular interpretation to exist:
\begin{defn}
\label{defn:modular period}
    Let $\A\in\mathcal{D}_{0,5}$.
    We say that $\overline{\M}_{0,\A}$ \emph{has a Deligne-Mostow modular interpretation} if it is isomorphic to the Baily-Borel compactification of a ball quotient $\overline{\B^2/\Gamma}$ for a (not necessarily arithmetic) subgroup $\Gamma\subset\U(1,2)$ and the isomorphism is given by a composition of the following two isomorphisms:
    \begin{enumerate}
        \item (A variation of weights in the Hassett spaces and GIT quotients.) An isomorphism $\overline{\M}_{0,\A} \cong (\P^1)^5/\!/_{\L}\SL_2(\C)$ given by the limit of configurations, that is either of the morphisms defined in \cite[Theorems 8.2, 8.3]{Has03} for some $\L$.
        \item (A Deligne-Mostow uniformization.) The Delinge-Mostow isomorphism of the varieties $(\P^1)^5/\!/_{\L}\SL_2(\C)\cong \overline{\B^2/\Gamma}$ given by the INT condition \cite{DM86} (and not the $\Sigma$INT condition introduced in \cite{Mos86}).
    \end{enumerate}
\end{defn}
Starting from moduli spaces of weighted pointed rational curves, we can establish the following connection to ball quotients.
\begin{prop}
\label{prop:relation to DM}
For any weight $\A\in\mathcal{D}_{0,5}$ the moduli space $\overline{\M}_{0,\A}$ has a Deligne-Mostow modular interpretation. All cases but (E) can be realized by a compact ball quotient.
\end{prop}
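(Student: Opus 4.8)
The plan is to match each of the six types (A)--(F) of weight data $\A$ with a specific Deligne--Mostow weight vector $\mu=(\mu_1,\dots,\mu_5)$ with $\sum_i\mu_i=2$ and $0<\mu_i<1$, and then chase the two isomorphisms of Definition \ref{defn:modular period} in turn. First I would invoke Proposition \ref{prop:contraction} to identify the abstract surface underlying $\overline{\M}_{0,\A}$ (a del Pezzo surface of degree $5+d(\A)$, with the two degree-$8$ cases distinguished by type (D) versus (E)), and then use Hassett's limit-of-configurations morphisms \cite[Theorems 8.2, 8.3]{Has03} to realize $\overline{\M}_{0,\A}$ as a GIT quotient $(\P^1)^5/\!/_{\L}\SL_2(\C)$ for a suitable linearization $\L$. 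The point is that the GIT chamber decomposition of $(\P^1)^5$ under $\SL_2(\C)$ is governed by the same kind of hyperplane arrangement ($\sum_{i\in S}\mu_i=1$) as Hassett's walls, so that a weight $\A$ in a given Hassett chamber can be pushed, via reduction of weights, into the GIT-stable locus for an appropriate $\L$; one must check (as Hassett does in Section 8) that for type (A) the GIT quotient is the strictly semistable-free locus giving the degree-$5$ del Pezzo, while the smaller degrees correspond to chambers where one allows some strictly semistable configurations, contracting the corresponding boundary curves.

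Next I would apply the Deligne--Mostow machine \cite{DM86}: for a weight vector $\mu$ satisfying the INT condition (for all $i\neq j$ with $\mu_i+\mu_j<1$, one has $(1-\mu_i-\mu_j)^{-1}\in\Z$), the GIT quotient $(\P^1)^5/\!/_\L\SL_2(\C)$ is isomorphic to the Baily--Borel compactification $\overline{\B^2/\Gamma}$. So the task is to exhibit, for each type, an INT weight vector whose GIT quotient is the relevant surface. Here I would lean on \cite[Theorem 4.1 (i)--(vi)]{KLW87}: Kirwan--Lee--Weintraub already list INT (in fact ball-quotient) realizations of all del Pezzo surfaces of degree $\ge 5$, including both degree-$8$ surfaces, and I would simply quote their weight data, verifying that the GIT quotient attached to each of their weights is the same surface that Hassett's construction produces for the corresponding type. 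Concretely: type (A)/degree $5$ is the $(2,2,2,2,2)/5$ (up to normalization) symmetric case; types (B), (C), (D), (E), (F) correspond to asymmetric weight vectors with $1$, $2$, $3$, $3$, $4$ "light" points respectively, and one reads off from Proposition \ref{prop:contraction} which boundary curves get contracted and matches this with the strictly semistable configurations of the chosen $\L$.

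For the last sentence --- that all cases except (E) are compact ball quotients --- the mechanism is standard: the ball quotient $\B^2/\Gamma$ is compact (equivalently $\Gamma$ is cocompact) precisely when the weight vector $\mu$ has \emph{no} partial sums equal to $1$, i.e. the GIT quotient has no strictly semistable points and hence no cusps in the Baily--Borel compactification. For type (E), $\overline{\M}_{0,\A}\cong\P^1\times\P^1$ arises from a weight with three points of weight summing (in the relevant triple) exactly to the wall, forcing a strictly semistable orbit and thus a cusp, so the quotient is necessarily non-compact; for all the other types one can choose $\L$ (equivalently $\mu$) generic enough within the chamber that no partial sum hits $1$, giving cocompact $\Gamma$. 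I would make this precise by citing the relevant cases of \cite[Theorem 4.1]{KLW87}, which already records compactness, and noting that $\P^1\times\P^1$ cannot be a compact ball quotient since such a quotient (being of general type, or by Hirzebruch proportionality) has ample canonical bundle.

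The main obstacle I expect is the bookkeeping in step (1): one must verify that Hassett's limit morphism $\overline{\M}_{0,\A}\to(\P^1)^5/\!/_\L\SL_2(\C)$ is an \emph{isomorphism} (not just a birational contraction) for the correct choice of $\L$ in each of the six types, and that the resulting GIT quotient really is the surface predicted by Proposition \ref{prop:contraction}; this requires a careful comparison of Hassett's chamber structure with the VGIT chamber structure of $(\P^1)^5$ and tracking exactly which $(-1)$-curves $D_{I,I^c}$ are contracted on each side. Once that dictionary is in place, step (2) is a direct appeal to Deligne--Mostow together with the KLW list, and the compactness claim follows from the presence or absence of wall-crossing strictly semistable points.
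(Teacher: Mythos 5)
Your overall route coincides with the paper's: identify the underlying surface via Proposition \ref{prop:contraction}, pass to a GIT quotient $(\P^1)^5/\!/_{\L}\SL_2(\C)$ via \cite[Theorems 8.2, 8.3]{Has03}, and then match against a list of INT weights. (You quote \cite[Theorem 4.1]{KLW87}; the paper instead works through \cite[Appendix]{Thu98}, in part because the Deligne--Mostow and Kirwan--Lee--Weintraub tables are incomplete, though for mere existence KLW suffices.) The issue you flag as the main obstacle --- that Hassett's morphism be an isomorphism rather than a proper contraction --- only arises in the atypical case (E), where the paper settles it by minimality of $\P^1\times\P^1$.

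The genuine gap is in your compactness argument for type (E), in two places. First, the fallback claim that $\P^1\times\P^1$ cannot be a compact ball quotient ``since such a quotient is of general type / has ample canonical bundle'' is false in the relevant orbifold setting: Deligne--Mostow lattices $\Gamma\subset\U(1,2)$ have torsion, and compact quotients $\B^2/\Gamma$ by such lattices can perfectly well be rational surfaces --- indeed the content of the proposition is precisely that the rational del Pezzo surfaces of types (A)--(D) and (F) \emph{are} compact Deligne--Mostow ball quotients, so your argument would refute the rest of the statement. Second, your primary mechanism --- that type (E) ``arises from a weight with a triple summing exactly to the wall, forcing a strictly semistable orbit'' --- is not automatic. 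There exist typical, chamber-interior linearizations whose GIT quotient is $\P^1\times\P^1$, for instance $\L=\OO(3/5,3/5,3/5,1/10,1/10)$, for which no subset of weights sums to $1$ and hence there are no strictly semistable points; what fails is that no such interior weight satisfies the INT condition. That is a finite check against the Deligne--Mostow/Thurston list and cannot be read off from the chamber geometry alone; the paper's proof performs exactly this inspection, finding only the atypical, cusped realization (No.~8 of \cite[Appendix]{Thu98}) for type (E).
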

\begin{proof}
 We now prove that all Hassett spaces $\overline{\M}_{0,\A}$ are the Baily-Borel compactifications of ball quotients, preserving their modular interpretation as configuration spaces. 
The starting point is  \cite[Theorems 8.2, 8.3]{Has03}, which relates the 
Hassett spaces to GIT quotients of the form $(\P^1)^5/\!/_{\L}\SL_2(\C)$. 
Here $\L$ is an ample line bundle on $(\P^1)^5$ which is $\SL_2(\C)$-linearized.

To start with,  
we consider the typical case (\cite[Theorem 8.2]{Has03}), which means 
that there are no properly semi-stable points.  
 By Hassett's theorem, there exists a 
 weight $\A\in\mathcal{D}_{0,5}$ and an isomorphism
 \[(\P^1)^5/\!/_{\L}\SL_2(\C) \cong \overline{\M}_{0,\A}\]
 for some typical line bundle $\L$.
 The appendix \cite[Appendix]{Thu98} enumerates all cases in which such a GIT quotient $(\P^1)^5/\!/_{\L}\SL_2(\C)$ is isomorphic to a compact 
ball quotient $\B^2/\Gamma$ (via the Deligne-Mostow isomorphism for a subgroup $\Gamma \subset \U(1,2)$).
To start with, we focus on ``pure" and ``compact" cases in the notation \cite[Appendix]{Thu98}.
The notation ``pure" refers to the INT condition introduced in \cite{DM86}, not the $\Sigma$INT condition in \cite{Mos86}.
This implies that $(\P^1)^5/\!/_{\L}\SL_2(\C)$ itself (not divided by a symmetric group) admits a ball quotient structure.
Also, since the properly semi-stable points on $(\P^1)^5/\!/_{\L}\SL_2(\C)$ corresponds to the cusps of the Baily-Borel compactification $\overline{\B^2/\Gamma}$, the condition that $\L$ is typical implies that $\B^2/\Gamma$ must be compact.
The chamber decomposition from \cite[Proposition 5.1]{Has03} determines the isomorphism class of such ball quotients from the weight data.
Below, we follow the numbering (No.) in \cite[Appendix]{Thu98}. 
For a given $\L=\OO(r_1,_\cdots,r_5)$, normalized by $\sum_i r_i =2 $, counting the number of $S\in\mathbb{S}$ such that $\sum_{i\in S} r_i < 1$,  
we can deduce from \cite[Theorem 8.2]{Has03} that the Deligne-Mostow ball quotient associated with either of No. 9, No. 47, No. 48, No. 72, No. 74, No. 75, No. 78, or No. 89 is isomorphic to type (A) moduli spaces preserving their moduli interpretations as weighted configuration spaces.
    Similarly, No. 46, No. 65, No. 69, or No. 85 give type (B), No. 70 gives type (C), No. 45, No. 49, No. 57, or No. 79 give type (D), and No. 68 gives type (F).

    The remaining case is type (E). For this, we consider the atypical case.
    Let $\A\in\mathcal{D}_{0,5}$ be a weight such that $\A(I_i^c)\leq 1$ for $I_1=\{1,2\}, I_2=\{1,3\}, I_3=\{2,3\}$; the other cases are similar. Following \cite[Theorem 4.1 (iv)]{KLW87}, we consider the  
    atypical line bundle $\L=\OO(1/2,1/2,1/2,1/4,1/4)$, \cite[Theorem 8.3]{Has03} gives rise to a birational morphism $\rho:\overline{\M}_{0,\A} \to (\P^1)^5/\!/_{\L}\SL_2(\C)$.
    By Proposition \ref{prop:contraction} (3), the moduli space $\overline{\M}_{0,\A}$ is isomorphic to $\P^1\times\P^1$. As this surface is minimal, it follows that
    $(\P^1)^5/\!/_{\L}\SL_2(\C)$ is also isomorphic to $\P^1\times\P^1$ and that $\rho$ is an isomorphism.
    Now, No. 8 in \cite[Appendix]{Thu98} shows that $(\P^1)^5/\!/_{\L}\SL_2(\C)$ is isomorphic to the Baily-Borel compactification of a non-compact ball quotient through the Deligne-Mostow morphism.
    This concludes the proof. There are other cases, which we will not list here, which lead to type (E). They can be treated in the same way. A direct inspection further shows that there are no compact Deligne-Mostow quotients 
    giving $\P^1 \times \P^1$.
   
In Table \ref{tab:thurston-types} we summarize the correspondence between the chamber types in Definition \ref{defn:classification for n=5} and the numbering in \cite[Appendix]{Thu98}. 
We remark that the original list \cite[p. 86]{DM86} and, accordingly, also the tables
\cite[Tables I, II]{KLW87} are incomplete (for example, in the case of type (A), No. 72 and No. 78  are missing). 
      
\end{proof}

    \begin{table}[htbp]
\centering
\small
\begin{tabular}{c|l}
Type of $\overline{\M}_{0,\A}$ & No.\ in \cite[Appendix]{Thu98} \\
\hline
(A) & 9, 47, 48, 72, 74, 75, 78, 89 \\
(B) & 46, 65, 69, 85 \\
(C) & 70 \\
(D) & 45, 49, 57, 79 \\
(E) & 8 and further atypical cases of the same form \\
(F) & 68
\end{tabular}
\caption{Correspondence between the chamber types in Definition \ref{defn:classification for n=5} and the numbering in \cite[Appendix]{Thu98} appearing in the proof of Proposition \ref{prop:relation to DM}. The cases (A), (B), (C), (D), and (F) arise from pure compact cases, while type (E) is obtained from atypical cases.}
\label{tab:thurston-types}
\end{table}

We also note that there is another approach to the observation that $\overline{\M}_{0,5}$ is isomorphic to a 2-dimensional compact ball quotient. 
This was pursued by Kond\=o in \cite{Kon07} studying  moduli spaces of K3 surfaces with an order 5 automorphism.
Based on his observation, Kond\=o posed the problem whether all varieties appearing in the Deligne–Mostow list, are related to periods of K3 surfaces  \cite[Problem 7.2]{Kon07}, which was later answered 
positively by Moonen \cite[Theorem A]{Moo18}.
Our previous discussion shows that any moduli space $\overline{\M}_{0,\A}$ has a ball quotient model for $n=5$.
Also, in higher-dimensional cases, there is the possibility that there are other period maps, different from that coming from the monodromy of Appell-Lauricella hypergeometric functions, which lead to ball quotients. This leads to the following questions.
\begin{que}
\label{que:ball1}
    Given a weight $\A\in\mathcal{D}_{0,n}$, is there always a suitable period map such that the moduli space $\overline{\M}_{0,\A}$ admits a ball quotient model?
\end{que}
In certain instances, ball quotients are known to admit more than one modular interpretation. This phenomenon is sometimes referred to as {\textit {Janus-like}} algebraic varieties; see \cite{HW94}. This leads naturally to the following question:  
\begin{que}
\label{que:ball2}
Let $\Gamma\subset\U(1,n-3)$ be a discrete subgroup and $\M$ be a moduli space which admits an open immersion $\M\hookrightarrow\B^{n-3}/\Gamma$. Is there some suitable weight 
$\A\in\mathcal{D}_{0,n}$ with an open immersion  $\M_{0,n}\hookrightarrow \B^{n-3}/\Gamma$ such that $\overline{\M}_{0,\A}\cong \overline{\B^{n-3}/\Gamma}\cong \overline{\M}$?
If so, can one give a geometric explanation for such an isomorphism?
\end{que}
There is an extensive list of papers on moduli spaces that admit ball quotient models, including the following references: 
\cite{ACT02, ACT11, CMGHL23,CMJL12, DM86, DvGK05, GKS21, HKM24,HM25a,HM25b, HW94, KLW87, Kon07, Moo18, Mos86}. The moduli problems addressed in these works encompass configurations of weighted points, cubic surfaces, cubic threefolds, non-hyperellitpic curves of genus 4,
and various moduli spaces of K3 surfaces (with prescribed automorphism group).  
We note that this list is not exhaustive.

\section{Fulton's conjecture}
\label{sec:Fulton's conjecture}
The divisors $D_{I,I^c}$ for any $I\subset \mathbb{N}_n$ with $2\leq |I|\leq \lfloor n/2 \rfloor$ 
on $\overline{\M}_{0,n}$ are called \textit{vital divisors}.
A complete intersection of them is  called a \textit{vital cycle}.
It is known that the vital curves correspond to partitions of $\mathbb{N}_n$ into four disjoint subsets.  If the cardinalities of these subsets are 
$a, b, c$ and $d$, we denote the corresponding curve by $C(a,b,c,d)$; see \cite[Section 4]{KM13}.
\begin{defn}
    Let $E$ be a divisor on $\overline{\M}_{0,n}$.
    We say $E$ is \textit{$F$-nef} if $C(a,b,c,d).E\geq 0$ for all partitions corresponding to the numbers $a,b,c$ and $d$.
\end{defn}
Fulton's conjecture \cite[Conjecture 0.2]{GKM02}
asserts that $F$-nef divisors are nef:
\begin{conj}[{Fulton's conjecture}]
\label{conj:fulton}
    The $F$-nef divisors are nef, that is, if $C(a,b,c,d).E\geq 0$ for all $(a,b,c,d)$, then $C.E\geq 0$ for all irreducible curves $C$.
\end{conj}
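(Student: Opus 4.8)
The final statement, Conjecture~\ref{conj:fulton}, is Fulton's conjecture in its full generality over all $n$; it is classically known only for small $n$ and remains open in general, so a proof plan can only isolate the dual reformulation and the inductive mechanism and pinpoint where the difficulty concentrates. The plan is first to dualize. A divisor $E$ is nef precisely when $E\cdot C\ge 0$ for every irreducible curve $C$, i.e.\ when $[E]$ lies in the dual cone $\overline{NE}(\overline{\M}_{0,n})^\vee$ of the Mori cone. The $F$-curves $C(a,b,c,d)$ span a closed subcone $F_1\subseteq\overline{NE}(\overline{\M}_{0,n})$, and the hypothesis that $E$ is $F$-nef is exactly the assertion $[E]\in F_1^\vee$. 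Since $F_1\subseteq\overline{NE}$ one always has $\overline{NE}^\vee\subseteq F_1^\vee$, which is the trivial implication (nef $\Rightarrow$ $F$-nef); Conjecture~\ref{conj:fulton} is the reverse inclusion $F_1^\vee\subseteq\overline{NE}^\vee$. By biduality of closed convex cones this is equivalent to $\overline{NE}(\overline{\M}_{0,n})=F_1$, i.e.\ to the purely geometric claim that \emph{every extremal ray of the Mori cone is generated by an $F$-curve}. I would take this reformulation as the statement to prove.

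Second, I would run an induction on $n$. The base cases $n\le 4$ are trivial, since $\overline{\M}_{0,n}$ is then a point or $\P^1$, and the already-verified small cases (e.g.\ $n\le 7$) serve as further anchors. The inductive engine is the forgetful morphism $\pi\colon\overline{\M}_{0,n}\to\overline{\M}_{0,n-1}$ together with the boundary divisors $D_{I,I^c}$. Given an irreducible curve $C\subset\overline{\M}_{0,n}$, I would push it forward along $\pi$. If $\pi(C)$ is a point, then $C$ lies in a fiber of $\pi$, which is a tree of rational curves whose components have classes controlled by $F$-curves, and the claim is checked directly. If $\pi(C)$ is a curve, the inductive hypothesis expresses $[\pi(C)]$ as a nonnegative combination of $F$-classes on $\overline{\M}_{0,n-1}$; one then lifts each such $F$-curve to an $F$-curve upstairs and must show that the vertical discrepancy $[C]-(\text{lift of }\pi_\ast[C])$ is again a nonnegative combination of $F$-classes, using fiber classes and the boundary relations. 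This is the mechanism behind the genus-reduction of \cite{GKM02} and underlies the known verifications for small $n$.

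The hard part will be precisely this lifting and vertical-correction step, and it is where the conjecture genuinely resists. Pushing forward and lifting back does not preserve extremality, and the vertical discrepancy need not visibly lie in $F_1$; as $n$ grows, the number of boundary strata and the combinatorics of the $F$-curves explode, and one cannot \emph{a priori} exclude an ``exotic'' extremal ray of $\overline{NE}(\overline{\M}_{0,n})$ spanned by no $F$-curve. A complete proof would therefore require a uniform argument, independent of $n$, ruling out such rays: for an arbitrary extremal class one would want to exhibit a contraction $\overline{\M}_{0,n}\to X$ — in the spirit of the reduction maps to Hassett spaces $\overline{\M}_{0,\A}$ used in Proposition~\ref{prop:contraction}, now for general $n$ — whose exceptional behaviour is governed by $F$-curves, thereby reducing the class to a simpler target accessible to the inductive hypothesis. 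I expect that constructing such a uniformly controlled family of contractions, rather than the dualization or the set-up of the induction, is the true obstacle, and it is exactly why the statement remains only a conjecture for general $n$.
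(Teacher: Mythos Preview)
The statement is a \emph{conjecture}, and the paper does not prove it: it is stated as Conjecture~\ref{conj:fulton} and then only the special case $n=5$ (more generally $n\le 7$) is invoked via Theorem~\ref{thm:fulton}, with references to \cite{GKM02,KM13}. Your proposal is appropriate in that you explicitly acknowledge the conjecture is open in general and do not pretend to give a proof; your dual reformulation $\overline{NE}(\overline{\M}_{0,n})=F_1$ and the forgetful-map induction are indeed the standard framing, and your identification of the vertical-correction step as the genuine obstruction is accurate. There is nothing to compare against a ``paper's own proof'' here, since none exists; the paper simply uses the known $n=5$ case as a black box in Section~\ref{section: proof of the main theorem}.
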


This conjecture has attracted considerable attention. It was Gibney-Keel-Morrison and Keel-McKernan who solved it in the special case  $n=5$; see also Remark \ref{rem:F conjecture}.
\begin{thm}[{A special case of \cite[Corollary 0.4]{GKM02}, \cite[Theorem 1.2]{KM13}}]
\label{thm:fulton}
Conjecture \ref{conj:fulton} holds for $\overline{\M}_{0,5}$.
\end{thm}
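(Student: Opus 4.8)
The plan is to deduce the statement from the classical description of $\overline{\M}_{0,5}$ as a del Pezzo surface of degree $5$, reducing nefness to a finite check on the $(-1)$-curves. Recall that on a smooth projective surface $X$ a divisor $E$ is nef if and only if $C.E\ge 0$ for every class $C$ in the Mori cone $\overline{NE}(X)$, and that for a del Pezzo surface $\overline{NE}(X)$ is a rational polyhedral cone generated by the classes of the (finitely many) $(-1)$-curves: since $-K_X$ is ample, adjunction forces every irreducible curve of negative self-intersection to be a $(-1)$-curve, and there are only finitely many of these. Hence for $X=\overline{\M}_{0,5}$ it suffices to show that $E.D\ge 0$ for each of the $10$ negative curves $D$, which by the proof of Proposition \ref{prop:contraction} are exactly the vital divisors $D_{I,I^c}$ with $I\in\mathbb{I}$.

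The remaining point is to match these $(-1)$-curves with the vital curves appearing in the definition of $F$-nefness. A vital curve $C(a,b,c,d)$ on $\overline{\M}_{0,5}$ corresponds to a partition of $\mathbb{N}_5$ into four non-empty parts, and the only possibility is $5=2+1+1+1$; so there are exactly $\binom{5}{2}=10$ vital curves, one for each choice of the two-element part. For the partition $\{i,j\}\sqcup\{k\}\sqcup\{l\}\sqcup\{m\}$ the corresponding vital curve is the one-parameter family in which the rigid tail carrying the marked points $i,j$ is fixed while the four-pointed component carrying $k,l,m$ and the attaching node varies over $\overline{\M}_{0,4}\cong\P^1$. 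Comparing this with the generic point of the boundary divisor $D_{\{i,j\},\{i,j\}^c}$ shows that the two curves coincide (see also \cite{KM13}). Thus on $\overline{\M}_{0,5}$ the vital curves, the vital divisors, and the $(-1)$-curves are one and the same set of $10$ curves.

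Granting this, the proof concludes formally: if $E$ is $F$-nef then $C(2,1,1,1).E\ge 0$ for all $10$ vital curves, hence $D_{I,I^c}.E\ge 0$ for every $(-1)$-curve on the del Pezzo surface $\overline{\M}_{0,5}$; since these classes generate $\overline{NE}(\overline{\M}_{0,5})$, we obtain $C.E\ge 0$ for every irreducible curve $C$, i.e. $E$ is nef.

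The only genuinely geometric input is the identification carried out in the second paragraph: that every vital curve on $\overline{\M}_{0,5}$ is one of the $(-1)$-curves $D_{I,I^c}$, equivalently that the $10$ vital curves already generate the Mori cone of the degree-$5$ del Pezzo surface. I expect this to be the step requiring the most care. It can be done either by the moduli-theoretic matching sketched above, or, alternatively, by computing the numerical classes of the curves $C(2,1,1,1)$ in $N_1(\overline{\M}_{0,5})$ and checking that they reproduce the Petersen-graph incidence pattern of the $10$ negative curves recorded in the proof of Proposition \ref{prop:contraction}. Everything else is the standard description of the nef cone of a surface whose Mori cone is polyhedral.
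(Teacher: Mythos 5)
Your proposal is correct in substance, but note that the paper itself offers no proof of Theorem \ref{thm:fulton}: the statement is imported as a known result from \cite[Corollary 0.4]{GKM02} and \cite[Theorem 1.2]{KM13}, so any argument you give is by construction a different route, namely a self-contained proof of the $n=5$ case. Your route is the natural one and is essentially how those references dispose of this base case: for $n=5$ the only partitions of $\mathbb{N}_5$ into four non-empty parts have type $(2,1,1,1)$, the corresponding ten vital curves are exactly the ten boundary divisors $D_{I,I^c}$ (a one-dimensional boundary stratum on a surface \emph{is} a boundary divisor), and these are the ten $(-1)$-curves of the degree-$5$ del Pezzo surface $\overline{\M}_{0,5}$, as recorded in the proof of Proposition \ref{prop:contraction}. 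What your proof buys is self-containedness; what it costs is that you must justify, or at least cite properly, the assertion that $\overline{\mathrm{NE}}(\overline{\M}_{0,5})$ is generated by the $(-1)$-curves. Your stated reason --- that every irreducible curve of negative self-intersection is a $(-1)$-curve and that there are finitely many of them --- does not by itself yield generation of the Mori cone; one also needs the Cone Theorem together with the observation that on a del Pezzo surface of Picard rank at least $3$ every extremal ray is spanned by a $(-1)$-curve (the blanket claim ``for a del Pezzo surface $\overline{\mathrm{NE}}$ is generated by the $(-1)$-curves'' fails for $\P^2$ and for $\P^1\times\P^1$, which have none). For the degree-$5$ surface this is classical and can be quoted from \cite{Man86} or \cite[Proposition 6.5]{Kon07}, both already cited in the paper. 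With that reference supplied, the chain ``$F$-nef $\Rightarrow$ non-negative on all generators of the Mori cone $\Rightarrow$ nef'' is complete and your identification of vital curves with the $(-1)$-curves is accurate.
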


The case $n=5$ is particularly accessible because $\overline{\M}_{0,5}$ is a smooth projective surface, in fact the del Pezzo surface of degree $5$. Thus, questions about nefness can be reduced to intersection theory on curves, and the relevant geometry can be described explicitly in terms of the ten boundary $(-1)$-curves and their contractions. Moreover, the vital curves appearing in Fulton's conjecture are easy to 
understand in this surface case, so that the passage from $F$-nefness to nefness becomes especially effective. This is precisely the feature that makes the arguments in Section \ref{section: proof of the main theorem} concrete: once the divisor is written as a pullback from $\overline{\M}_{0,\A}$ plus an exceptional part, one can verify positivity by explicit intersection computations on a del Pezzo surface.

\begin{rem}
\label{rem:F conjecture}
    Here we summarise the present knowledge on the various Fulton's conjectures for the cone of curves on the moduli space $\overline{\M}_{0,n}$ for general $n$.
    \begin{enumerate}
        \item Conjecture \ref{conj:fulton} is also called the \textit{weak} version.
        It is proved by Gibney-Keel-Morrison \cite[Corollary 0.4]{GKM02} and Keel-McKernan \cite[Theorem 1.2]{KM13} for $n\leq 7$.
        For the case $n \geq 8$, no counter‑example is known, but neither is a proof.   
        \item The \textit{strong} Fulton conjecture asserts that any $F$-nef divisor can be written as a summation of effective boundary divisors.
        This is proven for $n\leq 6$ by \cite[Theorem 2]{FG03}, and $n=7$ by Larsen \cite[Theorem 3.2]{Lar12}.
        However, Paxton \cite[Proposition 1]{Pix13} constructed an $F$‑nef, base‑point‑free divisor on $\overline{\M}_{0,12}$ that is not numerically equivalent to any effective boundary combination, thereby giving a counter‑example to the strong Fulton conjecture for $n=12$ (and, by pull‑back, for all $n\ge 12$).
        \item The bridge theorem \cite[Theorem 0.3]{GKM02} reduces Fulton's conjecture for $\overline{\M}_{g,n}$ to the \textit{symmetric} Fulton's conjecture for $\overline{\M}_{0,n}$.
        Fedorchuk \cite[Corollary 3]{Fed20} proved the symmetric version for $n \leq 35$.
    \end{enumerate}
\end{rem}

\section{Proof of Theorem \ref{thm:n=5}}
\label{section: proof of the main theorem}
Our proof of Theorem \ref{thm:n=5} is based on Fulton's conjecture, which is known to be true in this case (Theorem \ref{thm:fulton}).
Hence, below, we compute the intersection numbers of $D_{I,I^c}$ and the curves $C(a,b,c,d)$.

The proof proceeds in three steps. First, for a given weight $\A\in \mathcal{D}_{0,5}$, we rewrite the divisor
\[
K_{\overline{\M}_{0,5}}+\sum_{I\in\mathbb{I}} d_{I,I^c}D_{I,I^c}
\]
as the pullback of a divisor on $\overline{\M}_{0,\A}$ plus an effective $\rho_{\A}$-exceptional divisor, where $\rho_{\A}\colon \overline{\M}_{0,5}\to \overline{\M}_{0,\A}$ is the reduction morphism. Second, we show that the divisor downstairs is $F$-nef by checking its intersection with the vital curves. Since Fulton’s conjecture holds for $n=5$, this implies that the divisor is in fact nef. Finally, we apply the basepoint-free theorem to conclude that the divisor on $\overline{\M}_{0,\A}$ is semiample, and hence the associated log canonical model is exactly $\overline{\M}_{0,\A}$.

Assume $\A\in\mathcal{D}_{0,5}$ is not of type (A).
We consider the divisors $E\defeq \sum_{J\in\mathbb{I}} D_{J,J^c}$ and $F\defeq E - \sum_{I\in D(\A)} D_{I,I^c}$ on $\overline{\M}_{0,5}$.
Let $\rho_{\A}:\overline{\M}_{0,5} \to \overline{\M}_{0,\A}$ be the contraction map constructed in \cite[Theorem 4.1]{Has03}.
We set $D_{J,J^c}^{\A}\defeq \rho_{\A*} D_{J,J^c}$ for any $J\in\mathbb{I}\setminus D(\A)$, and $F^{\A}\defeq \rho_{\A*}F$.
We denote the centre of the blowup $\rho_{\A}$ by $Z^{\A}$.
A straightforward computation shows
\[\rho_{\A}^*F^{\A} = F + \sum_{I\in D(\A)}(\ord_{Z^{\A}}F^{\A})D_{I,I^c} = F + 3\sum_{I\in D(\A)} D_{I,I^c}.\]
The last equation follows from counting the number of $J\in\mathbb{I}\setminus D(\A)$ such that $J\subset I^c$ for all $I\in D(\A)$, see \cite[Introduction]{Kee92} and also the proof of \cite[Theorem 2.3.4]{Sim08}.
Since $\mathrm{codim}_{\overline{\M}_{0,\A}} Z^{\A} = 2$ and $\rho_{\A}$ is the blowup of a smooth variety in a smooth centre by Proposition \ref{prop:contraction},
the general formula for the canonical divisor of a 
blowup gives
\[K_{\overline{\M}_{0,5}} = \rho_{\A}^*K_{\overline{\M}_{0,\A}} + \sum_{I\in D(\A)} D_{I,I^c}.\]
These two relationships show that for any $\alpha_I,\beta \in\Q$ 
we have
\begin{equation} \label{equ:K05}
K_{\overline{\M}_{0,5}} + \sum_{I\in D(\A)} \alpha_I D_{I,I^c} + \beta F = \rho_{\A}^* (K_{\overline{\M}_{0,\A}} + \beta F^{\A}) + \sum_{I\in D(\A)}(\alpha_I - 3\beta + 1) D_{I,I^c}. 
\end{equation}

Defining 
\[\L(\beta) \defeq K_{\overline{\M}_{0,5}} + (3\beta -1)\sum_{I\in D(\A)}D_{I,I^c} + \beta F,\]
it follows from (\ref{equ:K05}) that $\L(\beta) = \rho_{\A}^*(K_{\overline{\M}_{0,\A}} + \beta F^{\A})$.
Furthermore, the 
$\Q$-divisor \[(K_{\overline{\M}_{0,5}} + \sum_{I\in D(\A)}\alpha_I D_{I,I^c} + \beta F) - \L(\beta) = \sum_{I\in D(\A)}(\alpha_I -3\beta +1)D_{I,I^c}\] is effective and $\rho_{\A}$-exceptional if  $\alpha_I - 3\beta + 1 \geq 0$ for any $I\in D(\A)$.

 We shall compute the intersection number with vital curves.
    Here, we have to consider all vital curves, not only \textit{symmetric} ones, because our divisors are not symmetric.
    In this case, the representatives of vital curves correspond to \begin{align*}
        (a,b,c,d) = (1,1,1,2), (1,1,2,1), (1,2,1,1),(2,1,1,1).
    \end{align*}
    By \cite[Lemma 2.3.1]{Sim08}, we have 
    \[\L(\b) = 3(\b-1/2)\sum_{I\in D(\A)} D_{I,I^c} + (\b-1/2)F.\]
    Given this description, we can use \cite[Lemma 4.3]{KM13} to compute the intersection numbers of $\L(\b)$ with the curves $C(a,b,c,d)$.
    Here we treat the computation for the case when $\A$ is of type (B), $D(\A) = \{I\}$, and  $I = \{4,5\}$ in detail. The other cases are similar and will be left to the reader.
    For simplicity we put $T_1\cup T_2\cup T_3\cup T_4$ as the partition corresponding to $(a,b,c,d)$.
The theorem by Keel and Mckernan \cite[Lemma 4.3]{KM13} tells us that
\begin{align}
\label{eq:intersection 1112}
    \L(\b)\cdot C(1,1,1,2) = 3(\b-1/2)\cdot \gamma_1 + (\b-1/2)\cdot \sum_J\gamma_{2,J}
\end{align}
where 
\begin{align*}
    \gamma_1 &=\#\{i\in\{2,3,4\}\mid I=T_1\cup T_i\ \mathrm{or}\ I^c=T_1\cup T_i\} - \#\{i\in\mathbb{N}_4\mid I=T_i\ \mathrm{or}\ I^c=T_i\}\\
    \gamma_{2,J} &=\#\{i\in\{2,3,4\}\mid J=T_1\cup T_i\ \mathrm{or}\ J^c=T_1\cup T_i\} - \#\{i\in\mathbb{N}_4\mid J=T_i\ \mathrm{or}\ J^c=T_i\}
\end{align*}
for $J\neq I$.

We consider the intersection with $C(1,1,1,2)$, which corresponds to the partition $\{1\}\cup\{2\}\cup\{3\}\cup\{4,5\}$.
Then, we have $\gamma_1 = 0-1 = -1$ and $\sum_J\gamma_{2,J} = 3-0 = 3$.
This computation leads us to rewrite (\ref{eq:intersection 1112}) as 
\[\L(\b)\cdot C(1,1,1,2) = -3(\b-1/2) + 3(\b-1/2) = 0.\]
Note that this shows that $C(1,1,1,2)$ is $\rho_{\A}$-exceptional and coincides with the exceptional divisor $D_{I,I^c}$.
For other quadruples, a similar computation shows that $\L(\b)\cdot C(a,b,c,d)\ge 0$ if and only if $\beta \geq 1/2$.
In this range, $\L(\beta)$ is $F$-nef.
    By Fulton's conjecture (Theorem \ref{thm:fulton}), $\L(\beta)$ is nef.
    Moreover if $\beta > 1/2$, then the intersection number of $\L(\beta)$ to $C(a,b,c,d)$ is 0 if and only if $C(a,b,c,d)$ is $\rho_{\A}$-exceptional.    
A similar computation of the intersections numbers applies to the other types from (C) to (F).
The key point is that for any $\A$, we can find a partition $(a,b,c,d)$ such that the curve $C(a,b,c,d)$ is not contracted by $\rho_{\A}$.
Computing the intersection of $\L(\b)$ with such $C(a,b,c,d)$, combined with Fulton's conjecture (Theorem \ref{thm:fulton}), we conclude that if $\beta-1/2>0$, then $\L(\beta)$ is nef, and the intersection number is zero if and only if $C(a,b,c,d)$ is $\rho_{\A}$-exceptional.

    We can now conclude the proof by \cite[Corollary 2.3.5]{Sim08}, which, for the reader's sake, we give below.
    Summarizing, we have 
    \begin{align}
        H^0(\overline{\M}_{0,5}, K_{\overline{\M}_{0,5}} + \sum_{I\in D(\A)}\alpha_I D_{I,I^c} + \beta F) &= H^0(\overline{\M}_{0,5}, \L(\beta) + \sum_{I\in D(\A)}(\alpha_I - 3\beta + 1) D_{I,I^c}) \label{eq:1}\\
        &=H^0(\overline{\M}_{0,\A}, K_{\overline{\M}_{0,\A}} + \beta F^{\A})\notag
    \end{align} 
    by \cite[Proposition 2.2.3]{Sim08} (apply the theorem to $D=K_{\overline{\M}_{0,\A}} + \b F^{\A}$ and the $\rho_{\A}$-exceptional divisor $F = (\a-3\b+1)D_{I,I^c}$ in his notation). 
    Also, from \cite[Lemma 2.2.2]{Sim08}, we have 
    \begin{align}
        \label{eq:2}
        H^0(\overline{\M}_{0,5},\L(\beta)) = H^0(\overline{\M}_{0,\A}, K_{\overline{\M}_{0,\A}} + \beta F^{\A}).
    \end{align}
    By \cite[Lemma 2.3.1]{Sim08}, if $\alpha_I - 3\beta + 1 >0$ and $\beta > 1/2$, then $K_{\overline{\M}_{0,5}} + \sum_{I\in D(\A)}\alpha_I D_{I,I^c} + \beta F$ is big. 
    In particular, the equation $H^0(\overline{\M}_{0,5},\L(\beta)) = H^0(\overline{\M}_{0,5}, K_{\overline{\M}_{0,5}} + \sum_{I\in D(\A)}\alpha_I D_{I,I^c} + \beta F)$ by (\ref{eq:1} and (\ref{eq:2})) implies that 
    $\L(\beta)$ is also big.
    Since bigness is an open condition, it follows from the Kawamata basepoint free theorem (\cite[Theorem 2.2.1]{Sim08}), for the pair $(\overline{\M}_{0,5}, \sum_{I\in D(\A)}\alpha_I D_{I,I^c} + \beta F)$ and the divisor $\L(\beta)$, that $\L(\beta)$ is semi-ample.
    This implies that the log canonical model $\overline{\M}_{0,5}(\{d_{I,I^c}\}_{I\in\mathbb{I}})$, where $\alpha_I \defeq d_{I,I^c}$ for $I\in D(\A)$ and $\beta\defeq d_{I,I^c}$ for $I\in\mathbb{I}\setminus D(\A)$ with $\alpha_I - 3\beta + 1 >0$ and $\beta > 1/2$, is the image of the embedding of $\overline{\M}_{0,5}$ via the linear system $|m\L(\beta)|$ for sufficiently large $m>>0$, but the latter morphism coincides with $\rho_{\A}$ since those two morphisms contract only $\rho_{\A}$-exceptional curves.
\begin{rem}[{Suggested by M. Hattori}]
\label{rem:VGIT}
In Proposition \ref{prop:relation to DM}, we studied the relationship between the moduli space of weighted pointed rational curves $\overline{\M}_{0,\A}$ and the moduli space of weighted points on rational curves $(\P^1)^5/\!/_{\L}\SL_2(\C)$.
In particular, this shows  that all GIT quotients with  typical line bundles, which correspond to inner points of the chambers in VGIT, appear as log canonical models of $\overline{\M}_{0,5}$. Here, we consider the case of atypical line bundles $\L$, lying on a wall in the weight space in VGIT.
    By \cite[Theorem 8.3]{Has03}, we obtain a birational morphism $\overline{\M}_{0,\A} \to (\P^1)^5/\!/_{\L}\SL_2(\C)$ for some $\A\in\mathcal{D}_{0,5}$.
    Take the example 
    \[\L=\OO\left(\frac{2}{3},\frac{1}{3},\frac{1}{3},\frac{1}{3},\frac{1}{3}\right).\]
The corresponding GIT quotient has, in fact, four strictly semi-stable points.
This weight 
lies on a wall in VGIT and can be considered as a limit of the following two weights:
\[\mathscr{M}_1\defeq\OO\left(\frac{2}{3}+4\epsilon,\frac{1}{3}-\epsilon,\frac{1}{3}-\epsilon,\frac{1}{3}-\epsilon,\frac{1}{3}-\epsilon\right),\ \mathscr{M}_2\defeq\OO\left(\frac{2}{3}-4\epsilon,\frac{1}{3}+\epsilon,\frac{1}{3}+\epsilon,\frac{1}{3}+\epsilon,\frac{1}{3}+\epsilon\right)\]
for small $\epsilon > 0$.
Since $\mathscr{M}_1$ and $\mathscr{M}_2$ are typical, we can take weights $\A_1, \A_2\in\mathcal{D}_{0,5}$ with $(\P^1)^5/\!/_{\L_i}\SL_2(\C) \cong \overline{\M}_{0,\A_i}$.
One can check that $\A_1$ (resp. $\A_2$) is of type (F) (resp. type (A)).
This implies that $(\P^1)^5/\!/_{\L}\SL_2(\C)$ admits a birational morphism from $\P^2$ as coarse moduli spaces, which forces the former also to be isomorphic to $\P^2$.
This space is also known to be a Deligne-Mostow variety appearing in \cite[Appendix]{Thu98} as No. 2., and \cite[Theorem 4.1 (vi)]{KLW87} gives another proof of the above statement.
Thus, in the case $n = 5$, we can analyse how the GIT quotients arise as log canonical models not only through the variation of weights in the sense of Hassett, but also through the variation of the GIT weights.
In higher dimensions, the appearance of flips necessitates a more refined analysis.
\end{rem}

\section{Log canonical models of the moduli spaces of weighted pointed rational curves with symmetric weights}
\label{sec:Log canonical models of the moduli spaces of weighted pointed rational curves with symmetric weights}
Hassett 
originally posed the problem \cite[Problem 7.1]{Has03} of determining the log canonical models for all $\overline{\M}_{0,\A}$, not only for $\overline{\M}_{0,n}$. 
As outlined in Section \ref{sec:introduction}, and discussed so far, most previous research is devoted and restricted to the case of $\overline{\M}_{0,n}$.
In this section, we discuss the Hassett spaces $\overline{\M}_{0,\A}$ that are different from the Deligne-Mumford moduli spaces $\overline{\M}_{0.n}$, 
to partially answer the original question by Hassett.
More precisely, we refine Theorem \ref{thm:lcmodel_previous} from the point of view of log canonical models for all moduli spaces $\overline{\M}_{0, n\cdot (1/k)}$ which appear in the blowup sequences
\[\overline{\M}_{0,n} \to\overline{\M}_{0,n\cdot(1/3)}\to\cdots\to\overline{\M}_{0,n\cdot(1/\lfloor (n-1)/2\rfloor)}\to (\P^1)^n/\!/\SL_2(\C),\]
see for instance \cite[Theorem 1.1]{KM11}.

Recall that $D_{I,I^c}$ denote the vital divisors on $\overline{\M}_{0,n}$ for $2\le |I|\le \lfloor n/2\rfloor$.
For a weight $\A$, let $\rho_{\A}:\overline{\M}_{0,n} \to \overline{\M}_{0,\A}$ be the reduction morphism.
If $D_{I,I^c}$ is not contracted by $\rho_{\A}$, we set $D_{I,I^c}^{\A}\defeq \rho_{\A*}D_{I,I^c}$ as in the proof in Section \ref{section: proof of the main theorem}.
Let $D^{\A}(r) \defeq \sum_{|I| = r}D_{I,I^c}^{\A}$ for $2\le r\le \lfloor n/2\rfloor$.
The following theorem, a fairly straightforward generalization of known results, 
states that Theorem \ref{thm:lcmodel_previous} also holds if $\overline{\M}_{0,n}$ is replaced by $\overline{\M}_{0,n\cdot(1/(k -\ell))}$. 
\begin{thm}
\label{thm:lcmodel}
Let $\a\in\Q$ with $2/(n-1) < \a \leq 1$ and $D_{k-\ell}\defeq\overline{\M}_{0,n\cdot (1/(k-\ell))}\setminus \M_{0,n}$ for $\ell\geq 0$.
Then, the log canonical model
\[\overline{\M}_{0,n\cdot(1/(k-\ell))}(\a) \defeq \mathrm{Proj} \bigoplus_{s\geq 0} H^0(\overline{\M}_{0,n\cdot(1/(k-\ell))}, s(K_{\overline{\M}_{0,n\cdot(1/(k-\ell))}} + \a D_{k-\ell} )) \]
   satisfies the following 
\begin{enumerate}
    \item If $2/(k+2)< \a \leq 2/(k+1)$ for $1\leq k \leq \lfloor (n-1)/2\rfloor$, then $\overline{\M}_{0,n\cdot(1/(k-\ell))}(\a)\cong \overline{\M}_{0, n\cdot (1/k)}$.
    \item If $2/(n-1) < \a \leq 2/(\lfloor n/2 \rfloor+1)$, then $\overline{\M}_{0,n\cdot(1/(k-\ell))}(\a)\cong (\P^1)^n//_{\OO(1,\cdots, 1)}\SL_2(\C)$.
\end{enumerate}
\end{thm}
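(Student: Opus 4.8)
The plan is to deduce the statement from Theorem~\ref{thm:lcmodel_previous} by transporting the whole computation to $\overline{\M}_{0,n}$ along a reduction morphism. Set $m\defeq k-\ell$ (so $1\le m\le k$) and let $\rho_m\colon\overline{\M}_{0,n}\to\overline{\M}_{0,n\cdot(1/m)}$ be the reduction morphism of \cite[Theorem~4.1]{Has03}; by \cite[Theorem~1.1]{KM11} it is a composition of blow-ups along smooth centres, so $\rho_{m*}\OO=\OO$, $R^{i}\rho_{m*}\OO=0$, and adding an effective $\rho_m$-exceptional $\Q$-divisor to the pullback of a $\Q$-divisor does not change its spaces of global sections (\cite[Lemma~2.2.2, Proposition~2.2.3]{Sim08}). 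In particular, for every $s\geq 0$ (after replacing $s$ by a multiple making the divisors integral),
\[
H^{0}\!\bigl(\overline{\M}_{0,n\cdot(1/m)},\,s(K_{\overline{\M}_{0,n\cdot(1/m)}}+\a D_{k-\ell})\bigr)\;=\;H^{0}\!\bigl(\overline{\M}_{0,n},\,s\,\rho_m^{*}(K_{\overline{\M}_{0,n\cdot(1/m)}}+\a D_{k-\ell})\bigr),
\]
so the log canonical model $\overline{\M}_{0,n\cdot(1/(k-\ell))}(\a)$ is the $\mathrm{Proj}$ of the section ring on $\overline{\M}_{0,n}$ of $\rho_m^{*}(K_{\overline{\M}_{0,n\cdot(1/m)}}+\a D_{k-\ell})$.

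First I would make this pullback explicit. Exactly as in the derivation of \eqref{equ:K05} in Section~\ref{section: proof of the main theorem}, the discrepancy formula for the smooth blow-down $\rho_m$ together with the pullback formula for the vital divisors (governed by which vital strata $\rho_m$ contracts and with which multiplicities; cf.\ \cite[Introduction]{Kee92}, \cite[Lemma~4.3]{KM13}, \cite[Lemma~2.3.1]{Sim08}) yields
\[
\rho_m^{*}\bigl(K_{\overline{\M}_{0,n\cdot(1/m)}}+\a D_{k-\ell}\bigr)\;=\;K_{\overline{\M}_{0,n}}+\a D\;+\;\Delta_m(\a),
\]
where $D=\overline{\M}_{0,n}\setminus\M_{0,n}$ and $\Delta_m(\a)$ is an explicit $\Q$-divisor supported on the $\rho_m$-exceptional vital divisors whose coefficients are affine functions of $\a$. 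Next I would insert the factorisation underlying Theorem~\ref{thm:lcmodel_previous}: for $\a$ in the range of item~(1) with $k\ge m$, resp.\ of item~(2), its proof in \cite{AS12,FS11,KM11,Sim08} produces a presentation
\[
K_{\overline{\M}_{0,n}}+\a D\;=\;\rho_k^{*}\L_{\a}\;+\;\Theta_k(\a),
\]
where $\rho_k\colon\overline{\M}_{0,n}\to\overline{\M}_{0,n\cdot(1/k)}$ (resp.\ $\rho_k\colon\overline{\M}_{0,n}\to(\P^{1})^{n}/\!/_{\OO(1,\dots,1)}\SL_2(\C)$) is the reduction morphism, $\L_{\a}$ is ample, and $\Theta_k(\a)$ is an effective $\rho_k$-exceptional $\Q$-divisor supported on vital divisors.

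Since $\rho_k$ factors as $\rho\circ\rho_m$, where $\rho\colon\overline{\M}_{0,n\cdot(1/m)}\to\overline{\M}_{0,n\cdot(1/k)}$ (resp.\ to the GIT quotient) is again a reduction morphism, combining the last two displays writes $\rho_m^{*}(K_{\overline{\M}_{0,n\cdot(1/m)}}+\a D_{k-\ell})$ as $\rho_m^{*}\rho^{*}\L_{\a}$ plus a $\Q$-divisor supported on the $\rho_k$-exceptional vital divisors, namely $\Theta_k(\a)+\Delta_m(\a)$. The numerical heart of the argument is to check that this $\Q$-divisor is \emph{effective} on the stated $\a$-ranges: on the $\rho_m$-non-exceptional vital divisors its coefficients are those of $\Theta_k(\a)$, which are $\ge 0$ by Theorem~\ref{thm:lcmodel_previous}; on the $\rho_m$-exceptional ones one must add the explicit affine contributions of $\Delta_m(\a)$ and run the same elementary estimate as in \cite{KM11,Sim08}. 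Granting effectivity, pushing forward by $\rho_{m*}$ kills the $\rho_m$-exceptional part and carries the rest to an effective $\rho$-exceptional divisor, giving
\[
K_{\overline{\M}_{0,n\cdot(1/m)}}+\a D_{k-\ell}\;=\;\rho^{*}\L_{\a}\;+\;(\text{effective }\rho\text{-exceptional}).
\]
By the Kawamata base-point-free theorem and the exceptional-divisor argument of \cite[Theorem~2.2.1, Corollary~2.3.5]{Sim08} (bigness being an open condition), the log canonical model is then $\overline{\M}_{0,n\cdot(1/k)}$ in case~(1) and $(\P^{1})^{n}/\!/\SL_2(\C)$ in case~(2); when $\ell=0$ the morphism $\rho$ is the identity and the conclusion reduces to the ampleness of $K_{\overline{\M}_{0,n\cdot(1/k)}}+\a D_{k}$.

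The step I expect to be the main obstacle is the explicit divisor-class bookkeeping behind the first display: one must pin down the discrepancies of $\rho_m$ and the pullbacks of the vital divisors of $\overline{\M}_{0,n\cdot(1/m)}$, i.e.\ determine exactly which vital strata of $\overline{\M}_{0,n}$ are contracted by $\rho_m$ and with which multiplicities, since the blow-up centres are not naive complete intersections of the $D_{I,I^{c}}$ and require the stratification analysis of \cite[Section~4]{Has03} and \cite{KM13}. Once the coefficients of $\Delta_m(\a)$ are in hand, verifying effectivity of $\Theta_k(\a)+\Delta_m(\a)$ on the relevant ranges is routine and parallels the positivity arguments proving Theorem~\ref{thm:lcmodel_previous}. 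An essentially equivalent alternative --- presumably the sense in which the theorem is ``a fairly straightforward generalization'' --- is to skip the pullback and re-run the Alexeev--Swinarski/Kiem--Moon/Simpson argument verbatim on $\overline{\M}_{0,n\cdot(1/m)}$: Fulton's conjecture on $\overline{\M}_{0,n\cdot(1/m)}$ follows from Theorem~\ref{thm:fulton} by pulling back curves along $\rho_m$, and the relevant $F$-nef computations descend.
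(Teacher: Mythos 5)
Your proposal is correct in outline and rests on the same mechanism as the paper's proof --- write the log canonical divisor as the pullback of an ample divisor along a reduction morphism plus an effective exceptional $\Q$-divisor, then conclude with the Simpson-type lemmas --- but you execute it by a detour through $\overline{\M}_{0,n}$, whereas the paper works directly on $\overline{\M}_{0,n\cdot(1/(k-\ell))}$. Concretely, the paper takes the reduction morphism $\rho:\overline{\M}_{0,\mathcal{B}}\to\overline{\M}_{0,\A}$ with $\mathcal{B}=n\cdot(1/(k-\ell))$, $\A=n\cdot(1/k)$, quotes the explicit formulas of \cite[Lemma 5.3, Proposition 5.4]{KM11} for $K_{\overline{\M}_{0,\mathcal{B}}}$ and for $\rho^*D^{\A}(j)$ in terms of the $D^{\mathcal{B}}(j)$, computes $K_{\overline{\M}_{0,\mathcal{B}}}+\a D_{k-\ell}-\rho^*(K_{\overline{\M}_{0,\A}}+\a D_k)$ in a single display, observes that it is effective and $\rho$-exceptional, and finishes using the ampleness of $K_{\overline{\M}_{0,\A}}+\a D_k$ from \cite[Proposition 5.6]{KM11}. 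Your route instead pulls everything back to $\overline{\M}_{0,n}$ and splices in the decomposition $K_{\overline{\M}_{0,n}}+\a D=\rho_k^*\L_{\a}+\Theta_k(\a)$ underlying Theorem \ref{thm:lcmodel_previous} before pushing forward again; this lets you reuse the known case as a black box, at the cost of controlling two exceptional contributions ($\Theta_k$ and $\Delta_m$) instead of one and of the heavier discrepancy bookkeeping you yourself flag as the main obstacle. The paper's choice of intermediate space makes that bookkeeping an immediate consequence of the quoted KM11 formulas.

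Two caveats. First, the effectivity you defer as ``routine'' is exactly where the hypothesis on $\a$ is consumed, so it cannot be granted: in the paper's coordinates the coefficient of $D^{\mathcal{B}}(j)$ for $k-\ell+1\le j\le k$ in the exceptional difference is $\a\bigl(1-\tbinom{j}{2}\bigr)+j-2$, which is nonnegative precisely when $\a\le 2/(j+1)$, i.e.\ for all such $j$ exactly when $\a\le 2/(k+1)$. (The final line of the paper's own display reads $\a+\tfrac{2}{n-1}\tbinom{j}{2}+j-2$, which would be positive for every $\a>0$ and would make the range hypothesis superfluous; this appears to be a sign slip, and the corrected coefficient is the one that matches the stated range.) Your plan does close once this estimate is carried out, but it must be carried out. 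Second, your closing alternative --- rerunning the argument on $\overline{\M}_{0,n\cdot(1/m)}$ by pulling back Fulton's conjecture along $\rho_m$ --- is only available for those $n$ where the weak Fulton conjecture is known ($n\le 7$, cf.\ Remark \ref{rem:F conjecture}); Theorem \ref{thm:fulton} concerns only $n=5$, and the unconditional proofs of Theorem \ref{thm:lcmodel_previous} for general $n$ do not pass through Fulton.
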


For the readers' sake, we first give a brief outline of our approach.
The proof follows the same general strategy as in Section \ref{section: proof of the main theorem}, this time applied to the reduction morphism
\[
\rho : \overline{\M}_{0,n\cdot(1/(k-\ell))}\longrightarrow \overline{\M}_{0,n\cdot(1/k)}.
\]
We first compare the canonical divisor and the boundary divisors on the two Hassett spaces and rewrite
\[
K_{\overline{\M}_{0,n\cdot(1/(k-\ell))}}+\alpha D_{k-\ell}
\]
in terms of the pullback of a divisor on $\overline{\M}_{0,n\cdot(1/k)}$ together with an exceptional contribution.
A direct calculation shows that, in the relevant range of $\alpha$, this exceptional part is effective, while the divisor downstairs is precisely the one whose log canonical model is already known from the symmetric Hassett-Keel picture. Thus the argument reduces to comparing divisor classes under $\rho$ and applying the same method as in Section \ref{section: proof of the main theorem} to identify the resulting log canonical model.

Since $\overline{\M}_{0,n} \cong \overline{\M}_{0, n\cdot (1/2)}$ holds, Theorem \ref{thm:lcmodel} indeed generalizes Theorem \ref{thm:lcmodel_previous}.
       Note that $\overline{\M}_{0,n\cdot(1/(k-\ell))}$ is smooth and the map $\overline{\M}_{0,n\cdot(1/(k-\ell))}\to \overline{\M}_{0,n\cdot (1/k)}$ is (a composition of) smooth blow-ups along smooth transversal centres and $D_{k-\ell}$ is a simple normal crossing divisor; 
       see also \cite[Section 1]{KM11}.
The proof of Theorem \ref{thm:lcmodel} is achieved by comparing the canonical divisors of $\overline{\M}_{0,n\cdot(1/(k-\ell))}$ and $\overline{\M}_{0,n\cdot(1/k)}$.
\begin{proof}[Proof of Theorem \ref{thm:lcmodel}]

Let $m\defeq \lfloor n/2 \rfloor$, $\A\defeq n\cdot (1/k)$,  $\mathcal{B}\defeq n\cdot (1/(k-\ell))$, and $\rho:\overline{\M}_{0,\mathcal{B}} \to \overline{\M}_{0,\mathcal{A}}$ be the reduction morphism as in \cite[Definition 5.7]{KM11}.
By \cite[Lemma 5.3, Proposition 5.4]{KM11}, we have
\begin{align*}
    K_{\overline{\M}_{0,\A}} &= -\frac{2}{n-1}
D^{\A}(2) + \sum_{j \ge k+1}^m \left(-\frac{2}{n-1}\binom{j}{2} + j-2\right) D^{\A}(j),\\
\rho^*D^{\A}(2) &= D^{\mathcal{B}}(2) + \sum_{j=k-\ell+1}^{k} \binom{j}{2} D^{\mathcal{B}}(j), \\
\rho^*D^{\A}(j) &= D^{\mathcal{B}}(j)\quad (k+1 \le j \le m). \\
\end{align*}
Note that $\epsilon_{m-k}$ in \cite[Section 1]{KM11} 
 corresponds to $n\cdot (1/k)$ in our notation.
Let $D_k\defeq \overline{\M}_{0,\A}\setminus \M_{0,5}$.
Then
\begin{align*}
    A(\a, k-\ell, k)\defeq &\rho^*(K_{\overline{\M}_{0,\A}} + \a D_k)
    \\
    =&\left(\a-\frac{2}{n-1}\right)\sum_{j=2\ \mathrm{or}\ j\geq k-\ell+1}^{k}\binom{j}{2}D^{\mathcal{B}}(j) + \sum_{j\geq k+1}^m \left(\a-\frac{2}{n-1}\binom{j}{2}+j-2\right) D^{\mathcal{B}}(j).
\end{align*}
It follows that
\begin{align*}
    &K_{\overline{\M}_{0,\mathcal{B}}}+\a D_{k-\ell}-A(\a,k-\ell,k)\\
    &=\left(\alpha - \frac{2}{n-1}\right)D^{\mathcal{B}}(2) + \sum_{j\ge k-\ell+1}^m \left(\alpha - \frac{2}{n-1}\binom{j}{2} + j-2\right)D^{\mathcal{B}}(j)\\
    &-\left(\alpha-\frac{2}{n-1}\right)\sum_{\substack{j=2\\ j\ge k-\ell+1}}^k\binom{j}{2}D^{\mathcal{B}}(j) - \sum_{j\ge k+1}^m\left(\alpha-\frac{2}{n-1}\binom{j}{2} + j -2\right)D^{\mathcal{B}}(j)\\
    &= \sum_{j\ge k-\ell + 1}^k\left(\alpha + \frac{2}{n-1}\binom{j}{2} + j - 2\right)D^{\mathcal{B}}(j).
\end{align*}
Since $D^{\mathcal{B}}(j)$  is contracted by $\rho$ for $k-\ell+1\le j \le k$,
this implies that the divisor $K_{\overline{\M}_{0,\mathcal{B}}}+\a D_{k-\ell}-A(\a,k-\ell,k)$ is $\rho$-exceptional and effective.
Hence, by the same discussion as \cite[Section 2]{Sim08}, if $K_{\overline{\M}_{0,\mathcal{A}}} + \a D_{k}$ is ample, then $\overline{\M}_{0,\mathcal{B}}(\a)\cong \overline{\M}_{0,\mathcal{A}}$ ; see also \cite[Corollary 2.3.5]{Sim08}.
Since the ampleness of this divisor has already been shown by, for example \cite[Proposition 5.6]{KM11}, the result follows.
\end{proof}

\end{document}